\theoremstyle{plain}
\newtheorem{thm}{Theorem}[section]
\newtheorem{cor}[thm]{Corollary}
\newtheorem{prop}[thm]{Proposition}
\theoremstyle{definition}
\newtheorem{dft}[thm]{Definition}
\theoremstyle{remark}
\newtheorem{rmk}[thm]{Remark}
\newcommand{\N}{\mathbb{N}}
\newcommand{\Z}{\mathbb{Z}}
\newcommand{\R}{\mathbb{R}}
\newcommand{\Lag}{\mathcal{L}}
\newcommand{\parcial}[2]{\frac{\partial #1}{\partial #2}}
\newcommand{\EL}{\mathcal{E}}
\newcommand{\V}{\mathcal{V}} 
\newcommand{\alglie}{\mathfrak{g}}
\newcommand{\fr}{\operatorname{fr }}
\newcommand{\interior}{\operatorname{int }}
\newcommand{\fibG}{\mathfrak{G}^*}
\newcommand{\lag}{\mathit{l}}
\newcommand{\U}{\mathcal{U}}
\begin{document}

\title{Discrete Lagrange problems with constraints valued in a Lie group}

\author{P. M. Chac\'on}
\address{IUFFyM and Dpto. Matem\'aticas, Universidad de Salamanca, Plaza de la Merced 1-4, 37008 Salamanca, Spain.}
\email{pmchacon@usal.es}

\author{A. Fern\'andez}
\address{IUFFyM and Dpto. Matem\'atica Aplicada, Universidad de Salamanca, Casas del Parque 2, 37008 Salamanca, Spain.}
\email{anton@usal.es}

\author{P. L. Garc\'{\i}a} 
\address{IUFFyM-USAL and Real Academia de Ciencias, Plaza de la Merced 1-4, 37008 Salamanca, Spain.}
\email{pgarcia@usal.es}

\keywords{
Cellular complexes, Discrete Lagrange problems, Constraints valued in a Lie group, Euler-Poincar\'e reduction in discrete field theory.
}

\subjclass[2020]{Primary 58A20, 53A70; Secondary 57R15, 49N99, 53Z05, 58E30.}

\thanks{Partially supported by Consejer{\'\i}a de Educaci\'on, Junta de Castilla y Le\'on (Spain), grant SA090G19.}

\begin{abstract}

The Lagrange problem is established in the discrete field theory subject  to constraints with values in a Lie group. For the admissible sections that satisfy a certain regularity condition, we prove that the critical sections of such problems are the solutions of a canonically unconstrained variational problem associated with the Lagrange problem (discrete Lagrange multiplier rule). This variational problem has a discrete Cartan 1-form, from which a Noether theory of symmetries and a multisymplectic form formula are established. The whole theory is applied to the Euler-Poincar\'e reduction in the discrete field theory, concluding as an illustration with the remarkable example of the harmonic maps of the discrete plane in the Lie group $SO(n)$.

\end{abstract}

\maketitle


\section{Introduction}

In \cite{lagdisc-jgp} the Lagrange problem is posed and solved in the discrete field theory for constraints valued in a vector space.

More specifically, with the notations and concepts of \cite{lagdisc-jgp} (see sections 2 and 3), the starting point of the doctrine is a bundle $\pi:Y\to V_0$ over the set $V_0$ of the vertices of an arbitrary $n$-dimensional cellular complex $V$ with  fibers $Y_v$, $v \in V_0$, differentiable manifolds  of the same dimension $m$, a Lagrangian density $\Lag : J^1Y\to \R$ on the bundle $j^1\pi:J^1Y \to V_n$ of the 1-jets of $\pi:Y\to V_0$ over the set $V_n$ of the $n$-dimensional cells (faces) of the  complex $V$, and a constraint submanifold  $S= \Phi^{-1}(0) \subset J^1Y$ where $\Phi: J^1Y\to E$ is a differentiable mapping from $J^1Y$ to a real vector space $E$ of dimension $l \le m$. If $\V$ is a finite set of faces of $V$ and $\V_0= \{v\in V_0\,\vert\, v\prec\alpha\in \V\}$ is the (finite) set  of the adherent vertices to the elements of $\V$, a section $y\in \Gamma(\V_0,Y)$ is said to be \textit{admissible}  when $\textrm{img}(j^1 y) \subset S$ ($j^1 y$  the 1-jet extension of the section $y$), and an  infinitesimal variation $\delta y \in T_y\big(\Gamma(\V_0,Y)\big)$ is said to be  \textit{admissible} when $j^1(\delta y)$ is tangent to $S$ along $\textrm{img}(j^1y) \subset  S$ ($j^1(\delta y)$ the 1-jet extension of the infinitesimal variation $\delta y$).  Under these conditions, the aim of the Lagrange problem  is to find the admissible sections that are critical sections of the action functional:
$$
\mathcal{A}_\V (\Lag): y\in \Gamma\big( \V_0,Y) \mapsto \sum_{\alpha \in \V} \Lag \big( (j^1y)(\alpha)\big)
$$
respect to  the admissible infinitesimal variations that vanish at the frontier of $\V$.

Under a certain condition of regularity, it can be proved that these critical sections are the solutions of the unconstrained variational problem with Lagrangian density $\widehat{\Lag}= \Lag + \lambda \circ \Phi$ on the fibered product $J^1Y\times_{V_n}(E^*\times V_n)$,  ($E^*$ dual of $E$), where $\lambda$  is the tautological function $\lambda(\omega, \alpha)\in E^*\times V_n \mapsto \omega \in E^*$ and $\circ$ is the bilinear product given by the duality pairing. This problem has a discrete Cartan 1-form  from which a Noether theory of symmetries and a  multisymplectic form  formula can be established.

Having established this multisymplectic formulation of discrete Lagrangian problems with constraints valued in a vector space, it will be desirable to generalize it to other more complex situations of interest. This is the aim of the present paper, in which we generalize the theory of discrete Lagrangian problems with constraint submanifold $S=\Phi^{-1}(e)\subset J^1Y$, where $\Phi$ is a mapping from $J^1Y$ to a Lie group $G$ ($e\in G$ the identity element of the group). We apply the doctrine thus developed to the important case of the Euler-Poincar\'e reduction in principal bundles by the action of its structural group. See \cite{cc-integradores, marco, reduccion-cont, reduccion-cont-sub, vank} and  references cited there. The interest of this situation continues in a recent pre-print \cite{navier} where a discretization of the Navier-Stokes-Fourier system is developed.

This paper is structured as follows. In Section \ref{sec:lig-grupo-lie} we set out the framework of the theory following the guidelines used in \cite{lagdisc-jgp}. The main result of this section is Theorem \ref{thm:carac-critica-lag-prob} which characterizes critical admissible sections that satisfy an adequate condition of regularity. A key concept of this formulation is the version we give of the Cartan 1-form $\Theta(\Phi)$  associated with the constraint map $\Phi: J^1Y\to G$  via the 1-form over $J^1Y$ with values in the Lie  algebra $\alglie$  of $G$ resulting from the composition $\theta\circ d\Phi$ of the tangent map $d\Phi$ with the Maurer-Cartan  1-form of the group.

In Section \ref{sec:multiplicadores}, the above result allows us identify the critical sections of these problems with the solutions of the Euler-Lagrange equations of an extended unconstrained discrete variational problem adapted to the new class of constraints. This variational problem has a discrete Cartan 1-form, from which a Noether theory of symmetries and a multisymplectic form formula are established.

In Section \ref{sec:euler-poincare} the formalism thus developed is applied to the Euler-Poincar\'e reduction in the discrete field theory, concluding by way of illustration with the example of the harmonic maps of the discrete plane in the Lie group $SO(n)$, which is discussed in Section \ref{sec:ejemplo}.

\section{Discrete Lagrange problems with constraints valued in a Lie group}\label{sec:lig-grupo-lie}
The data that define this type of problems are: a cellular complex, a fiber bundle over the set of vertices, and the jet extensions, a Lagrangian density, and a constraint map. Following the notation of \cite{lagdisc-jgp} (sections 2 and 3), we describe briefly these objects next.
 
\begin{itemize}
\item  For  an $n$-dimensional abstract cellular complex $V$,  $n \in \N$, we denote by $V_k = \{\alpha \in  V \vert \dim \alpha = k\}$ the set of the $k$-dimensional cells, $k\in \N$. In particular, the elements of $V_0$ are called vertices and the elements of $V_n$ are called faces.

We said that a cell $\alpha \in V_k$, $k\in\N$,  is \textbf{adherent} to another cell $\beta \in  V_{k+l}$, $l\in \N$, denoted by $\alpha\prec \beta$ , if $\alpha=\beta$ or there exists a sequence of cells $\alpha = \gamma_k , \gamma_{k+1} ,\dots , \gamma_{k+l} = \beta$, where $\gamma_{k+i}\in V_{k+i}, i=0,\dots, l$, such that each one is incident to the following.

The notion of proximity is given by the \textbf{spherical neighborhood} of a vertex $v\in V_0$, $S_v=\{\alpha\in V_n\vert v\prec \alpha\}$, that is, the set of the different faces that have $v$ as adherent vertex.

Given a vertex $v \in V_0$ and a subset   of faces $\V \subset V_n$, we say that $v$ is \textbf{interior} to $\V$ if $S_v \subset \V$, is exterior to $\V$ if $S_v \subset V_n \setminus \V$ or is \textbf{frontier} of $\V$ otherwise. We denote by  $\interior \V$ and $\fr \V$ the set of interior and frontier vertices of $\V$, respectively.

\item Consider a fiber bundle $\pi:Y\to V_0$ over the set of vertices of an $n$-dimensional cellular complex $V$. That is, $Y$ is a differential manifold, and for any $v\in V_0$ the fiber $Y_v=\pi^{-1}(v)$ is a differentiable manifold of dimension $m\in \N$.

\item  The role of the \textbf{bundle of 1-jets} in the continuous case is given in the discrete theory of fields by the bundle $j^1\pi:J^1Y\to V_n$ over the set of faces of the cellular complex $V$. For that, given $\alpha\in V_n$ consider 
$$(J^1 Y)_\alpha= (j^1\pi)^{-1}(\alpha)=\prod_{v\prec \alpha} Y_v \quad \textrm{ and }\quad J^1Y=\bigsqcup_{\alpha\in V_n}(J^1 Y)_\alpha.$$

For a section $y \in \Gamma(V_0 , Y )$,  the 1-jet extension $j^1 y \in \Gamma(V_n, J^1 Y )$ is defined at  $\alpha\in V_n$ by $(j^1 y)(\alpha ) = (y(v))$, where 
$v \prec \alpha $ .

In this context, the \textbf{2-jet bundle} extension of $\pi : Y \to V_0$ is the bundle $j^2\pi : J^2Y \to V_0$ over the vertices of $V$ such that the fiber over a $v \in V_0$ is:
$$ (J^2Y)_v = (j^2\pi)^{-1}(v)= \prod_{v'\prec \alpha\in S_v}Y_{v'},$$
where $S_v \subset V_n$ is the spherical neighborhood of $v$. That is, the fiber over the vertex $v$ is the product of the fibers of  $\pi:Y\to V_0$ over each vertex of each face that has $v$ as adherent vertex. If $y \in \Gamma (V_0, Y)$ is a section, the 2-jet extension of $y$ is  $j^2y \in \Gamma (V_0, J^2Y)$ defined by $(j^2y)(v) =(
y(v') )$, $v' \prec\alpha\in S_v$ for each vertex $v\in V_0$.

Note that there exists a canonical projection between these jet extensions. More precisely,  for any vertex $v \in V_0$ and any face $\alpha \in S_v$, given $(y_{v'})\in (J^2Y)_v$, where $v'\prec\beta \in S_v$,  we can collect the values over the vertices that are adherent precisely to the face $\alpha$ (and ignore the others). So we can define $\pi_{v\alpha}:(J^2Y)_v\to (J^1Y)_\alpha$ given by 
$$
\pi_{v\alpha}\big(\underbrace{(y_{v'})}_{v'\prec\beta \in S_v} \big)= \big(\underbrace{(y_{v'})}_{v'\prec\alpha}\big).
$$

\item A \textbf{discrete Lagrangian density} $\Lag:J^1Y \to \R$ over the bundle $j^1\pi:J^1Y \to V_n$ is a family of smooth functions $\Lag_\alpha : (J^1Y)_\alpha \to \R$ where $\alpha\in V_n$.

\item Finally, a \textbf{constraint with values in a Lie group} $G$ is a map $\Phi:J^1Y \to G$ such that, for all  $\alpha \in V_n$,  $\Phi_\alpha= \Phi\vert_{(J^1Y)_\alpha}$ is a differential map being $e\in G$ (the identity element of the group) a regular value. Thus, $S_\alpha= \Phi_\alpha^{-1}(e)$ is a submanifold of $(J^1Y)_\alpha$, and we can express $S= \Phi^{-1}(e)$ as the disjoint union $S=\sqcup_{\alpha\in V_n} S_\alpha$. We denote the tangent map of $\Phi_\alpha$, $\alpha\in V_n$, at $j^1_\alpha y \in (J^1Y)_\alpha$ by $(d\Phi_\alpha)_{j^1_\alpha y}$.

\end{itemize}

Now, let $\V$ be a finite set of faces and $\V_0= \{v\in V_0\,\vert\, v\prec \alpha \in \V\}$  the (finite) set of its adherent vertices. Then, similar to those set out in \cite{lagdisc-jgp}, we may give the following definitions.

\begin{dft}\label{def:sec-admisible}
A section $y\in \Gamma(\V_0,Y)$ is said to be admissible if $\textrm{img}(j^1y)\subset S$, that is, $(j^1y)(\alpha)\in S_\alpha \subset (J^1Y)_\alpha$ for all $\alpha \in \V$. 

We shall denote this subset of section by $\Gamma_S(\V_0,Y)$.
\end{dft}

 \begin{dft}\label{def:var-infinitesimal}
 Given an admissible section $y\in \Gamma_S(\mathcal{V}_0,Y)$, an admissible infinitesimal variation of $y$ is a tangent vector $\delta y\in T_y\big(\Gamma(\mathcal{V}_0,Y)\big)$ whose 1-jet extension $j^1\delta y$ is tangent to $S\subset J^1Y$ along $\textrm{img}(j^1y)\subset S$, that is, $(d\Phi_\alpha)_{(j^1y)(\alpha)} ( j^1\delta y)(\alpha)=0$ for all $\alpha \in \mathcal{V}$.
 
We shall denote by $T_y\big(\Gamma_S(\mathcal{V}_0,Y)\big)$ this subspace of infinitesimal variations, and by  \break $T_y^c\big(\Gamma_S(\V_0,Y)\big)$ the subspace of those that vanish at the frontier of  $\mathcal{V}$.
 \end{dft}

\begin{dft}\label{def:sec-critica}
An admissible section $y\in \Gamma_S(\V_0,Y)$ is critical with fixed boundary for the discrete Lagrange problem with Lagrangian density $\Lag:J^1Y \to \R$ and constraint submanifold $S=\Phi^{-1}(e) \subset J^1Y$ if $\big( d\mathcal{A}_\V(\Lag)\big)_{y}=0$ on the subspace $T_y^c\big(\Gamma_S(\V_0,Y)\big)$, where $\mathcal{A}_\V(\Lag)$ is the action functional 
$$\mathcal{A}_\V(\Lag): y\in \Gamma(\V_0,Y)\mapsto \sum_{\alpha\in \V} \Lag\big( (j^1y)(\alpha)\big).$$
\end{dft}

As in \cite{lagdisc-jgp}, under a certain condition of regularity such critical sections can be characterized as the solutions of an Euler-Lagrange operator by proceeding as follows.

From an initial section $y_0\in \Gamma(\V_0,Y)$, let $\Gamma(\V_0,Y)\big(y_0(\fr \V)\big)$ be the submanifold of $\Gamma(\V_0,Y)$ defined by the sections with fixed boundary $y_0(\fr\V)\subset Y$ and $\Gamma_S(\V_0,Y)\big(y_0(\fr\V)\big)$ the subset of the admissible ones.

We define the mapping $\Psi: \Gamma(\V_0,Y)\to \textrm{Map} (\mathcal{V},G)$ by the rule:
\begin{equation}\label{def-psi}
\Psi(y): \alpha\in \V\mapsto \Phi_\alpha \big((j^1y)(\alpha)\big), \quad y \in \Gamma(\V_0,Y).
\end{equation}

Since $\Phi_\alpha:(J^1Y)_\alpha\to G$ is differentiable so is $\Psi$, expressing its differential $(d\Psi)_y:T_y\big( \Gamma(\V_0,Y)\big) \to T_{\Psi(y)}\textrm{Map}(\V,G)$ at $y\in \Gamma(\V_0,Y)$ as:
\begin{equation}\label{deriv-psi}
(d\Psi)_{y}(\delta y): \alpha\in \mathcal{V}\mapsto (d\Phi_\alpha)_{(j^1y)(\alpha)} \big(( j^1\delta y)(\alpha)\big), \quad \delta y \in T_y\big(\Gamma(\V_0,Y)\big).
\end{equation}

Otherwise, taking into account the isomorphism $T_{\Psi(y)}\textrm{Map}(\V,G)\approx \textrm{Map}(\V,\alglie)$ induced by the left translations of the elements $g\in G$, $D_g\in T_gG \mapsto L_{g^{-1}}D_g \in T_eG=\alglie$,  the map $(d\Psi)_y$ can also be expressed as:
\begin{equation}\label{deriv-psi-2}
(d\Psi)_{y}(\delta y): \alpha\in \mathcal{V}\mapsto \big( \theta \circ (d\Phi_\alpha)_{(j^1y)(\alpha)} \big)\big( (j^1\delta y)(\alpha)\big), \quad \delta y \in T_y\big(\Gamma(\V_0,Y)\big),
\end{equation}
where $\theta$ is the Maurer-Cartan 1-form of the group $\theta:g\in G \mapsto \theta_g$ ($\theta_g: D_g\in T_gG\mapsto L_{g^{-1}} D_g\in T_eG= \alglie$) and where $\circ$ is the composition of maps.

In particular, for the admissible sections $y\in \Gamma_S(\V_0,Y)$ both expressions \eqref{deriv-psi} and \eqref{deriv-psi-2} of $(d\Psi)_y$ coincide because in this case $\Phi_\alpha(j^1 y)=e$, $\alpha \in \V$.

By restricting $\Psi$ to the submanifold $\Gamma(V_0,Y) \big(y_0(\fr\V)\big) \subset \Gamma(\V_0,Y)$  and $(d\Psi)_y$  to the subspace  $T_y\big( \Gamma(\V_0,Y)\big(y_0(\fr\V)\big)\big) \subset T_y \big(\Gamma(\V_0,Y)\big)$ we get:

\begin{prop}
$$
\Gamma_S(\V_0,Y) \big(y_0(\fr\V)\big) = \Psi^{-1}(e), \quad T_y^c\big( \Gamma_S(\V_0,Y)\big) = \ker (d\Psi)_y.
$$
\end{prop}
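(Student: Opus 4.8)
The plan is to prove the two set-equalities separately, in each case by a double inclusion that reduces to unwinding the definitions of $\Psi$ and its differential together with the definitions of admissible section and admissible infinitesimal variation.

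For the first equality $\Gamma_S(\V_0,Y)\big(y_0(\fr\V)\big) = \Psi^{-1}(e)$, I would argue as follows. A section $y$ lying in the submanifold $\Gamma(V_0,Y)\big(y_0(\fr\V)\big)$ belongs to $\Psi^{-1}(e)$ exactly when $\Psi(y)$ is the constant map $\alpha\mapsto e$, which by the defining rule \eqref{def-psi} means $\Phi_\alpha\big((j^1y)(\alpha)\big)=e$ for every $\alpha\in\V$. By the definition of $S_\alpha=\Phi_\alpha^{-1}(e)$ and $S=\sqcup_{\alpha\in V_n}S_\alpha$, this is precisely the condition $(j^1y)(\alpha)\in S_\alpha$ for all $\alpha\in\V$, i.e. $\textrm{img}(j^1y)\subset S$, which is exactly Definition \ref{def:sec-admisible} of admissibility. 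Intersecting both descriptions with the fixed-boundary condition $y\vert_{\fr\V}=y_0\vert_{\fr\V}$ gives the claimed equality.

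For the second equality $T_y^c\big(\Gamma_S(\V_0,Y)\big) = \ker(d\Psi)_y$, fix an admissible $y\in\Gamma_S(\V_0,Y)\big(y_0(\fr\V)\big)$. Since $y$ is admissible, $\Phi_\alpha\big((j^1y)(\alpha)\big)=e$ for all $\alpha$, so the two expressions \eqref{deriv-psi} and \eqref{deriv-psi-2} for $(d\Psi)_y$ agree, and I may use \eqref{deriv-psi}. An infinitesimal variation $\delta y\in T_y\big(\Gamma(\V_0,Y)\big(y_0(\fr\V)\big)\big)$ lies in $\ker(d\Psi)_y$ iff the map $\alpha\mapsto (d\Phi_\alpha)_{(j^1y)(\alpha)}\big((j^1\delta y)(\alpha)\big)$ vanishes identically on $\V$, i.e. $(d\Phi_\alpha)_{(j^1y)(\alpha)}\big((j^1\delta y)(\alpha)\big)=0$ for all $\alpha\in\V$; this is exactly the tangency-to-$S$ condition of Definition \ref{def:var-infinitesimal}, so $\delta y\in T_y\big(\Gamma_S(\V_0,Y)\big)$. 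Moreover $\delta y$ being tangent to the fixed-boundary submanifold $\Gamma(V_0,Y)\big(y_0(\fr\V)\big)$ means $\delta y$ vanishes on $\fr\V$, which is the extra condition defining the superscript $c$. Combining, $\ker(d\Psi)_y$ coincides with $T_y^c\big(\Gamma_S(\V_0,Y)\big)$, and conversely every such variation clearly lies in the kernel by the same computation.

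The only point requiring a little care — and the closest thing to an obstacle — is the identification of the tangent space to the fixed-boundary submanifold $\Gamma(V_0,Y)\big(y_0(\fr\V)\big)$ with the space of infinitesimal variations vanishing on $\fr\V$, and the compatibility of the restriction of $(d\Psi)_y$ to this subspace with the formula \eqref{deriv-psi}; but since $\Gamma(\V_0,Y)=\prod_{v\in\V_0}Y_v$ is a finite product of manifolds and the boundary constraint is simply the product factor over $\fr\V$ being frozen, this is immediate, and the rest is a direct translation between the analytic description via $\Psi$ and the geometric description via $S$. No regularity hypothesis on $y$ is needed for this proposition; it is a purely definitional reformulation, and the regularity condition will enter only later in Theorem \ref{thm:carac-critica-lag-prob}.
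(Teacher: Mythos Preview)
Your proposal is correct and follows exactly the same approach as the paper: the paper's proof is the single sentence ``It is enough to apply definitions \ref{def:sec-admisible} and \ref{def:var-infinitesimal}, and formulas \eqref{def-psi} and \eqref{deriv-psi} $\equiv$ \eqref{deriv-psi-2} because $y$ is admissible,'' and your argument is precisely a careful unpacking of that sentence. Your additional remarks about the tangent space to the fixed-boundary submanifold and the irrelevance of the regularity hypothesis are accurate and do not depart from the paper's intent.
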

\begin{proof}
It is enough to apply  definitions \ref{def:sec-admisible} and \ref{def:var-infinitesimal}, and formulas \eqref{def-psi} and \eqref{deriv-psi} $\equiv$ \eqref{deriv-psi-2} because $y$ is admissible. 
\end{proof}

Now, a key point of our approach is the following condition of regularity for admissible sections:

\begin{dft}\label{def:cond-regularidad}
An admissible section $y\in \Gamma_S(\V_0,Y) \big(y_0(\fr\V)\big)$ is said to be regular if $(d\Psi)_y : T_y\big( \Gamma(\V_0,Y)\big(y_0(\fr\V)\big)\big) \to \textrm{Map}(\V,\alglie)$ is onto.
\end{dft}

Under this hypothesis, the Inverse Function Theorem proves the existence in the manifold $\Gamma(\V_0,Y)\big(y_0(\fr \V)\big)$ of an open neighborhood $\mathcal{U}(y)$ of $y$ such that $\Gamma_S(\V_0,Y)\big(y_0(\fr \V)\big) \cap \mathcal{U}(y)$ is a submanifold of $\mathcal{U}(y)$ whose tangent space at $y$ is $\ker(d\Psi)_y= T_y^c\big(\Gamma_S(\V_0,Y)\big)$. Therefore, Definition \ref{def:sec-critica} of stationarity is equivalent to stating that $y$ is a critical point of the restriction of the action functional $\mathcal{A}_\V(\Lag)$ to the submanifold $\Gamma_S(\V_0,Y)\big(y_0(\fr \V)\big) \cap \mathcal{U}(y)$. On the other hand, we have the exact sequence: 
\begin{equation}\label{secuencia-exacta}
0\to T_y^ c\big(\Gamma_S(\V_0,Y)\big) \to T_y \Big(\Gamma(\V_0,Y)\big(y_0(\fr\V)\big)\Big) \xrightarrow{(d\Psi)_y}\textrm{Map}(\V,\alglie)\to 0,
\end{equation}
which, finally, will allow us to characterize  critical regular sections as solutions of an Euler-Poincar\'e operator.

Indeed, defining the notion of Cartan 1-form  associated to the constraint morphism $\Phi:J^1Y\to G$ as the family of 1-forms $\Theta_\alpha^v(\Phi)$ on $J^1Y$ with values in Lie algebra $\alglie$, $ \alpha\in \V$, $v\in\V_0$, $v\prec\alpha$, such that
\begin{equation}\label{cartan-phi}
\theta\circ (d\Phi_\alpha)_{j^1_\alpha y} = \sum_{v\prec \alpha} \big( \Theta^v_\alpha (\Phi)\big) _{j^1_\alpha y}, \quad j^1_\alpha y \in (J^1Y)_\alpha,
\end{equation}
the mapping $(d\Psi)_y$ of \eqref{secuencia-exacta} can be expressed as:
\begin{equation}\label{deriv-conmutacion-cartan}
(d\Psi)_{y}(\delta y): \alpha\in \V\mapsto \sum_{v\prec \alpha} \big( \Theta_\alpha^ v (\Phi) \big)_{(j^ 1y)(\alpha)}  (j^1\delta y)(\alpha), \quad \delta y \in T_y\Big(\Gamma(\V_0,Y)\big(y_0(\fr\V)\big)\Big),
\end{equation}
from where we can obtain the following characterization of the critical sections.

\begin{thm}\label{thm:carac-critica-lag-prob}
A regular admissible section $y\in \Gamma_S(\V_0,Y)\big( y_0(\fr\V)\big)$ is critical with fixed boundary for the Lagrange problem with Lagrangian density $\Lag: J^1Y\to \R$ and constraint submanifold $S=\Phi^{-1}(e)\subset J^1Y$ if and only if there exists a mapping $\lambda\in \textrm{Map}(\V,\alglie^*)$ such that for any $v\in\interior\V$:
\begin{equation}\label{multiplicador}
\Big(\EL_v(\Lag) + \sum_{\alpha\in S_v} \pi^*_{v\alpha}\big( \lambda(\alpha)\circ \Theta_\alpha^v (\Phi)\big) \Big)_{(j^2y)(v)}=0,
\end{equation}
where $\EL_v(\Lag)$ is the Euler-Lagrange 1-form of $\Lag$ at $v$ as unconstrained variational problem, $S_v$ is the spherical neighborhood of $v$, $\circ$ is the bilinear product of the duality between $\alglie$ and $\alglie^*$, and $\pi_{v\alpha}$ is the canonical projection from $(J^2Y)_v$ to $(J^1Y)_\alpha$. This mapping $\lambda$ is unique and we shall call the \textbf{multiplier associated} to the critical regular section $y$.
\end{thm}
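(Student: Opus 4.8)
The plan is to run the standard Lagrange-multiplier argument along the exact sequence \eqref{secuencia-exacta} and then to localise the resulting identity at each interior vertex. By the Inverse Function Theorem discussion preceding the statement, a regular admissible $y$ is critical precisely when the linear functional $(d\mathcal{A}_\V(\Lag))_y$ on $T_y\big(\Gamma(\V_0,Y)\big(y_0(\fr\V)\big)\big)$ annihilates $\ker(d\Psi)_y = T_y^c\big(\Gamma_S(\V_0,Y)\big)$. Since $(d\Psi)_y$ is onto $\textrm{Map}(\V,\alglie)$ in \eqref{secuencia-exacta}, such a functional factors uniquely through $(d\Psi)_y$: there is a unique linear $\mu\in\textrm{Map}(\V,\alglie)^*$ with $(d\mathcal{A}_\V(\Lag))_y + \mu\circ(d\Psi)_y = 0$. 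As $\V$ is finite, $\textrm{Map}(\V,\alglie)^*\cong\textrm{Map}(\V,\alglie^*)$, so $\mu$ is represented by a unique $\lambda\in\textrm{Map}(\V,\alglie^*)$ via $\mu(\xi) = \sum_{\alpha\in\V}\lambda(\alpha)\circ\xi(\alpha)$; this already yields existence and uniqueness of the multiplier, and it remains to rewrite the factorisation identity as \eqref{multiplicador}.

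To that end I would compute $(d\mathcal{A}_\V(\Lag))_y(\delta y)$ for $\delta y$ vanishing on $\fr\V$ in two ways. On one hand, substituting \eqref{deriv-conmutacion-cartan} gives $\mu\big((d\Psi)_y(\delta y)\big) = \sum_{\alpha\in\V}\sum_{v\prec\alpha}\lambda(\alpha)\circ\big(\Theta^v_\alpha(\Phi)\big)_{(j^1y)(\alpha)}(j^1\delta y)(\alpha)$. On the other hand, the first variation formula of the \emph{unconstrained} discrete problem of \cite{lagdisc-jgp} gives, for such $\delta y$, $(d\mathcal{A}_\V(\Lag))_y(\delta y) = \sum_{v\in\interior\V}\big(\EL_v(\Lag)\big)_{(j^2y)(v)}(j^2\delta y)(v)$.

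To bring the first expression into the same shape I would use the compatibility $(j^1\delta y)(\alpha) = (d\pi_{v\alpha})\big((j^2\delta y)(v)\big)$ valid for $\alpha\in S_v$ (immediate from the definitions of $j^1$, $j^2$ and $\pi_{v\alpha}$), together with the fact that, by \eqref{cartan-phi}, $\Theta^v_\alpha(\Phi)$ is the component of $\theta\circ d\Phi_\alpha$ along the factor $Y_v$ of $(J^1Y)_\alpha$ — so $\big(\Theta^v_\alpha(\Phi)\big)(j^1\delta y)(\alpha)$ depends only on $\delta y(v)$ and hence vanishes when $v\in\fr\V$. Interchanging the order of summation (the pairs with $v\in\fr\V$ drop out, and for $v\in\interior\V$ one has $S_v\subset\V$) then recasts $\mu\circ(d\Psi)_y$ as $\sum_{v\in\interior\V}\sum_{\alpha\in S_v}\big(\pi^*_{v\alpha}(\lambda(\alpha)\circ\Theta^v_\alpha(\Phi))\big)_{(j^2y)(v)}(j^2\delta y)(v)$, so that the factorisation identity of the first paragraph becomes
\begin{equation*}
\sum_{v\in\interior\V}\Big(\EL_v(\Lag) + \sum_{\alpha\in S_v}\pi^*_{v\alpha}\big(\lambda(\alpha)\circ\Theta^v_\alpha(\Phi)\big)\Big)_{(j^2y)(v)}(j^2\delta y)(v) = 0
\end{equation*}
for every $\delta y\in T_y\big(\Gamma(\V_0,Y)\big(y_0(\fr\V)\big)\big)$.

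Finally I would localise: since $\delta y(v)$ may be prescribed arbitrarily at any single interior vertex and zero elsewhere, and the $v$-th summand — like $\EL_v(\Lag)$ and the pulled-back Cartan forms — is of $Y_v$-type, so depends only on $\delta y(v)$, the contributions of distinct interior vertices are independent and each bracketed $1$-form must vanish at $(j^2y)(v)$, which is \eqref{multiplicador}. Running the argument backwards gives the converse: if \eqref{multiplicador} holds for some $\lambda$, then pairing with $(j^2\delta y)(v)$ and summing yields $(d\mathcal{A}_\V(\Lag))_y(\delta y) = -\sum_{\alpha\in\V}\lambda(\alpha)\circ(d\Psi)_y(\delta y)(\alpha)$ for every $\delta y$ vanishing on $\fr\V$, which in particular vanishes on $\ker(d\Psi)_y = T_y^c\big(\Gamma_S(\V_0,Y)\big)$, so $y$ is critical. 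I expect the main obstacle to lie not in the multiplier mechanism but in the bookkeeping of this middle step — reindexing the two summation orders consistently, checking that the frontier contributions genuinely cancel, and verifying that $\EL_v(\Lag)$, $\Theta^v_\alpha(\Phi)$ and the projections $\pi_{v\alpha}$ dovetail so that the $v$-th term lands on $(J^2Y)_v$ and sees only $\delta y(v)$ — all resting on the precise constructions recalled from \cite{lagdisc-jgp} and in Section \ref{sec:lig-grupo-lie}.
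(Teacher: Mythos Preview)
Your proposal is correct and follows essentially the same route as the paper's proof: both invoke the surjectivity of $(d\Psi)_y$ in the exact sequence \eqref{secuencia-exacta} to produce a unique $\lambda\in\textrm{Map}(\V,\alglie^*)$, expand $(d\mathcal{A}_\V(\Lag))_y$ via the unconstrained variation formula of \cite{lagdisc-jgp} and $(d\Psi)_y$ via \eqref{deriv-conmutacion-cartan}, swap the sums so that frontier vertices drop out, and conclude by arbitrariness of $\delta y$ at interior vertices. Your write-up is in fact a bit more explicit than the paper's on the reindexing bookkeeping and on the converse direction.
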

\begin{proof}
It is enough to repeat mutatis mutandis the proof of the Theorem 4.6 of Section 4 in \cite{lagdisc-jgp}. Indeed, from the exact dual sequence of \eqref{secuencia-exacta}:
$$
0\to \textrm{Map}(\V,\alglie)^* \xrightarrow{(d\Psi)_y^*}   T_y \Big(\Gamma(\V_0,Y)\big(y_0(\fr\V)\big)\Big)^*  \to T_y^c\big(\Gamma_S(\V_0,Y)\big) ^* \to 0,
$$
follows the identification of $\textrm{img}(d\Psi)_y^*$  with the elements of $T_y \Big(\Gamma(\V_0,Y)\big(y_0(\fr\V)\big)\Big)^*$ incidents with $T_{y}^{c}\big(\Gamma_{S}(\V_0,Y)\big)$.

So, if the section $y$ is critical (Definition \ref{def:sec-critica}), then there exists a map $\lambda\in\textrm{Map}(\V,\alglie)^*$ $= \textrm{Map}(\V,\alglie^*)$, that is unique by the injectivity of $(d\Psi)_y^*$, such that:
\begin{equation}\label{existe-lambda}
-\big(d\mathcal{A}_\V(\Lag)\big)_y (\delta y) = \big( (d\Psi)_y^*\lambda\big) (\delta y), \quad \delta y \in T_y \Big(\Gamma(\V_0,Y)\big(y_0(\fr\V)\big)\Big).
\end{equation}

By the variation formula (10) of \cite{lagdisc-jgp}, Section 3:
$$
\big(d\mathcal{A}_\V(\Lag)\big)_y (\delta y) = \sum_{v\in\interior\V} \big(\EL_v(\Lag)\big)_{(j^2y)(v)} (j^2\delta y )(v),
$$
since $(\delta y)_v=0$ at the vertices $v\in\fr\V$.

On the other hand, by \eqref{deriv-conmutacion-cartan} we have:
\begin{align*}
 \big( (d\Psi)_y^*\lambda\big)  (\delta y)  = & \lambda \circ (d\Psi)_y(\delta y) = \sum_{\alpha\in \V} \lambda(\alpha) \circ  \bigg( \sum_{v\prec \alpha} \big( \Theta_\alpha^ v (\Phi) \big)_{(j^ 1y)(\alpha)}  (j^1\delta y)(\alpha)\bigg)\\
= & \sum_{v \in\interior \V} \sum_{\alpha\in S_v} \lambda(\alpha) \circ  \big( \Theta_\alpha^ v (\Phi) \big)_{(j^ 1y)(\alpha)}  (j^1\delta y)(\alpha),
\end{align*}
since by swapping the sums, as $(\delta y)_v=0$ at the vertices $v\in\fr\V$, we are left with only the interior vertices and the faces of the corresponding spherical neighborhood. By replacing it now in equality \eqref{existe-lambda}, and using the canonical projection $\pi_{v\alpha}:(J^2Y)_v\to (J^1Y)_\alpha$, we finally obtain:
$$
\sum_{v\in \interior\V}\Big(\EL_v(\Lag) + \sum_{\alpha\in S_v} \pi^*_{v\alpha}\big( \lambda(\alpha)\circ \Theta_\alpha^v (\Phi)\big) \Big)_{(j^2y)(v)} (j^2\delta y ) (v)=0.
$$

From here the result follows because $(\delta y)_v$ is an arbitrary variation at the vertices $v\in\interior\V$. 

\end{proof}

\section{Lagrange multiplier rule. Noether's theorem. Multisymplectic form formula}\label{sec:multiplicadores}

The aim of section is to formulate the Lagrange problems that we have seen in the previous section as free variational problems via a Lagrange multipliers rule. In this sense the equations \eqref{multiplicador} that characterize the critical sections of the  Lagrange problem will be interpreted as the Euler-Lagrange equations of an unconstrained variational problem extended to the multipliers $\lambda\in \textrm{Map}(\V,\alglie^*)$ as follows.

Let $\alpha\in V_n$ and let us denote by $\Lag_\alpha$ and $\Phi_\alpha$ the restriction of $\Lag$ and $\Phi$ to the fibers $(J^1Y)_\alpha$ as before. Then, for every $\lambda_\alpha\in \alglie^*$ we can define the 1-form
\begin{equation*}
d\Lag_\alpha + \lambda_\alpha \circ (\theta\circ d\Phi_\alpha)
\end{equation*}
on  $(J^1Y)_\alpha$; where the first $\circ$ from the left denotes the duality pairing between $\alglie$ and $\alglie^*$.  That is, for every $\lambda\in \textrm{Map}(V_n,\alglie^*)$ we have the 1-form
\begin{equation}\label{deriv-prob-extendido}
d\Lag + \lambda \circ (\theta\circ d\Phi),
\end{equation}
on $J^1Y \times \alglie^* \equiv  J^1Y\times_{V_n} \fibG$, where $\fibG=\alglie^* \times V_n \to V_n$ is the trivial bundle on $V_n$ with fiber $\alglie^*$. We will use $\lambda_\alpha$ or $\lambda(\alpha)$ to denote the same element of $\alglie^*$.

This 1-form defines an unconstrained variational problem on the manifold $\Gamma(\V_0\times Y)\times \textrm{Map}(\V,\alglie^*)$ with action 1-form given by:
\begin{equation}\label{accion-extendida}
\big( \mathbb{A}_\V (\Lag,\Phi)\big)_{(y,\lambda)} (\delta y,\delta \lambda) = \sum_{\alpha\in \V} (d\Lag_\alpha)_{(j^1y)(\alpha)}(j^1\delta y)(\alpha) + \lambda_\alpha \circ (\theta \circ d\Phi_\alpha)_{(j^1y)(\alpha)} (j^1\delta y)(\alpha),
\end{equation}
$ (y,\lambda)\in \Gamma(\V_0,Y) \times \textrm{Map}(\V,\alglie^*)$, $(\delta y, \delta\lambda) \in T_y \big(\Gamma(\V_0,Y) \big)\oplus T_\lambda\big(\textrm{Map}(\V,\alglie^*)\big)$, from where the following notion of stationarity  is given.

\begin{dft}\label{def:estacionaridad}
We say that a section-mapping $ (y,\lambda)\in \Gamma(\V_0,Y) \times \textrm{Map}(\V,\alglie^*)$ is critical for the action 1-form $ \mathbb{A}_\V (\Lag,\Phi)$ if the section  $ y\hspace{-0.9pt}\in \hspace{-1pt}\Gamma(\V_0,Y) $ is admissible and  $\hspace{-1pt}\big( \mathbb{A}_\V (\Lag,\Phi)\big)_{(y,\lambda)}$ vanishes on the subspace $T_y^c \big(\Gamma(\V_0,Y) \big)\oplus T_\lambda\big(\textrm{Map}(\V,\alglie^*)\big)$.
\end{dft}

From \eqref{accion-extendida} we get that the subspace  $T_\lambda\big(\textrm{Map}(\V,\alglie^*)\big)$ is incident with the 1-form $ \mathbb{A}_\V (\Lag,\Phi)$, i.e.:
\begin{equation}\label{incidente-accion}
\big( \mathbb{A}_\V (\Lag,\Phi)\big)_{(y,\lambda)} (0,\delta \lambda)= 0 \quad \textrm{ for all } \quad  \delta \lambda \in T_\lambda\big(\textrm{Map}(\V,\alglie^*)\big).
\end{equation}

\begin{rmk}\label{rmk:sin-lag-extendido}
The unconstrained variational problem defined by the 1-form \eqref{deriv-prob-extendido} differs substantially from that introduced in Section 5 of  \cite{lagdisc-jgp} for Lagrange problems with constraints $\Phi:J^1Y\to E$ valued in vector spaces. As noted in the Introduction, these problems are defined by an unconstrained Lagrangian density $\widehat{\Lag}= \Lag + \lambda \circ \Phi$ ($\circ$ the bilinear product of the duality between $E$ and $E^*$), which is not generalizable to constraints $\Phi:J^1Y\to G$ valued in Lie groups. However, differentiating $\widehat{\Lag}$ we obtain:
$$
d\widehat{\Lag} = d\Lag + \lambda \circ d\Phi + d\lambda \barwedge\Phi,
$$
where $\barwedge$ denotes the wedge product of valued forms with respect to the bilinear form $\circ$ induced by the duality pairing. The first two terms of this formula are generalizable via the expression \eqref{deriv-prob-extendido} in which the Maurer-Cartan 1-form of   $G$ is inserted. In this sense, it is important to observe that by dispensing with the term $d\lambda \barwedge\Phi$ in this formulation (which is also not generalizable),  condition \eqref{incidente-accion} no longer allows to obtain the admissibility of the critical section-mappings as part of the Euler-Lagrange equations of the extended problem. This condition must be imposed separately, as contemplated in Definition \ref{def:estacionaridad} that we have given of stationarity.
\end{rmk}

Following the route of Section 5 of \cite{lagdisc-jgp} with the obvious changes we will have:
\begin{prop}[Variational formula]\label{prop:formula-variacion}
For any section-mapping $ (y,\lambda)\in \Gamma(\V_0,Y)  \times$ \break $\textrm{Map}(\V,\alglie^*)$ and for any tangent vector $(\delta y, \delta\lambda) \in T_{(y,\lambda)} \big(\Gamma(\V_0,Y) \times \textrm{Map}(\V,\alglie^*)\big)$,  we have \break now:
\begin{equation}\label{formulavariacion-prob-lag}
\begin{aligned}
\big(\mathbb{A}_\V (  \Lag, \Phi  )\big)_{(y,\lambda)} (\delta y, \delta\lambda)  = &  \sum_{v\in\interior\V}\big( \EL_v(\Lag)+ \sum_{\alpha\in S_v} \lambda(\alpha) \circ \pi^*_{v\alpha}\Theta_\alpha^v(\Phi)\big)_{(j^2y)(v)} (j^2\delta y)(v)\\
 & +  \sum_{(v\in \fr\V)\prec \alpha\in \V}\big( \Theta_\alpha^v(\Lag) + \lambda(\alpha) \circ \Theta_\alpha^v(\Phi)\big)_{(j^1y)(\alpha)} (j^1\delta y)(\alpha).
\end{aligned}
\end{equation}
\end{prop}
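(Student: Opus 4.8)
The plan is to obtain the formula \eqref{formulavariacion-prob-lag} by splitting the action 1-form \eqref{accion-extendida} into its Lagrangian part $\sum_\alpha (d\Lag_\alpha)(j^1\delta y)(\alpha)$ and its constraint part $\sum_\alpha \lambda_\alpha\circ(\theta\circ d\Phi_\alpha)(j^1\delta y)(\alpha)$, and treating each term with the same bookkeeping that underlies the variational formula (10) of \cite{lagdisc-jgp}. First I would recall the discrete Cartan 1-forms: writing $d\Lag_\alpha = \sum_{v\prec\alpha}\Theta_\alpha^v(\Lag)$ and, by \eqref{cartan-phi}, $\theta\circ(d\Phi_\alpha)_{j^1_\alpha y} = \sum_{v\prec\alpha}(\Theta_\alpha^v(\Phi))_{j^1_\alpha y}$, so that \eqref{accion-extendida} becomes a double sum over pairs $v\prec\alpha$ with $\alpha\in\V$ of the terms $\big(\Theta_\alpha^v(\Lag) + \lambda(\alpha)\circ\Theta_\alpha^v(\Phi)\big)_{(j^1y)(\alpha)}(j^1\delta y)(\alpha)$. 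The key move, exactly as in \cite{lagdisc-jgp}, is to reorganize this sum over incident pairs $(v,\alpha)$ by first summing over the vertex $v$: for interior vertices $v\in\interior\V$ one has $S_v\subset\V$, the inner sum runs over all $\alpha\in S_v$, and since $(j^1\delta y)(\alpha)$ at the vertices adherent to $\alpha\in S_v$ is determined by $(j^2\delta y)(v)$ via $\pi_{v\alpha}$, the contribution collapses to $\big(\EL_v(\Lag) + \sum_{\alpha\in S_v}\lambda(\alpha)\circ\pi^*_{v\alpha}\Theta_\alpha^v(\Phi)\big)_{(j^2y)(v)}(j^2\delta y)(v)$, the first line of \eqref{formulavariacion-prob-lag}; here the identity $\sum_{\alpha\in S_v}\pi^*_{v\alpha}\Theta_\alpha^v(\Lag) = \EL_v(\Lag)$ is precisely the defining relation of the Euler-Lagrange 1-form in the discrete setting, inherited from \cite{lagdisc-jgp}.

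For the remaining pairs, namely those $(v,\alpha)$ with $v\in\fr\V$ and $\alpha\in\V$, $v\prec\alpha$, there is no cancellation available — a frontier vertex does not have its whole spherical neighbourhood inside $\V$ — so these terms survive verbatim as the boundary sum $\sum_{(v\in\fr\V)\prec\alpha\in\V}\big(\Theta_\alpha^v(\Lag) + \lambda(\alpha)\circ\Theta_\alpha^v(\Phi)\big)_{(j^1y)(\alpha)}(j^1\delta y)(\alpha)$, which is the second line of \eqref{formulavariacion-prob-lag}. One must check that every incident pair with $\alpha\in\V$ is accounted for exactly once in the partition $\{v\in\interior\V\}\sqcup\{v\in\fr\V\}$ (exterior vertices are not adherent to any face of $\V$, so they contribute nothing), and that no face is double-counted when passing from the $\alpha$-indexed sum to the $v$-indexed sums; this is a purely combinatorial verification on the cellular complex.

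The only subtlety — and the step I expect to be the main obstacle — is the handling of the $\delta\lambda$ dependence and the absence of a term analogous to $d\lambda\barwedge\Phi$. As Remark \ref{rmk:sin-lag-extendido} stresses, the extended 1-form \eqref{deriv-prob-extendido} does not contain a $d\lambda$ piece, so \eqref{accion-extendida} is already purely a 1-form in $\delta y$: the contraction with $(0,\delta\lambda)$ vanishes identically (this is \eqref{incidente-accion}). Hence the left-hand side of \eqref{formulavariacion-prob-lag} depends on $(\delta y,\delta\lambda)$ only through $\delta y$, and the asserted right-hand side likewise involves only $j^1\delta y$ and $j^2\delta y$; so there is nothing to prove on the $\delta\lambda$ side, one just records the observation. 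The proof therefore reduces entirely to the Lagrangian-type reorganization described above, applied simultaneously to $\Lag$ and to the $\alglie$-valued form $\theta\circ d\Phi$ contracted against the fixed multiplier $\lambda(\alpha)\in\alglie^*$, which commutes past all the summations since it is a pointwise-constant pairing. Thus the whole argument is indeed ``mutatis mutandis'' from Section 5 of \cite{lagdisc-jgp}, the only genuinely new ingredient being that the constraint part is assembled from the Lie-algebra-valued Cartan forms $\Theta_\alpha^v(\Phi)$ of \eqref{cartan-phi} rather than from $d(\lambda\circ\Phi)$.
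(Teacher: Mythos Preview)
Your proposal is correct and matches the paper's own treatment: the paper does not give an explicit proof of the proposition but simply announces it as ``following the route of Section 5 of \cite{lagdisc-jgp} with the obvious changes,'' and what you have written is precisely a careful unpacking of those obvious changes --- the Cartan decomposition \eqref{cartan-phi} of $\theta\circ d\Phi_\alpha$, the swap of the $(v,\alpha)$ double sum (exactly as in the proof of Theorem \ref{thm:carac-critica-lag-prob}), the identification $\EL_v(\Lag)=\sum_{\alpha\in S_v}\pi^*_{v\alpha}\Theta_\alpha^v(\Lag)$, and the observation \eqref{incidente-accion} that the $\delta\lambda$ contribution is identically zero.
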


\begin{cor}[Euler-Lagrange equations]\label{cor:Euler-Lagrange}
A section-mapping  $ (y,\lambda)\in \Gamma(\V_0,Y) \times$ \break $\textrm{Map}(\V,\alglie^*)$ is critical for the action 1-form $\mathbb{A}_\V (  \Lag, \Phi  )$ if and only if the section $y\in \Gamma(\V_0,Y)$ is admissible and for any $v\in \interior \V$:
\begin{equation}\label{EL-prob-ligado}
\big(\EL_v(\Lag)+ \sum_{\alpha\in S_v} \lambda(\alpha) \circ \pi^*_{v\alpha}\Theta_\alpha^v(\Phi)\big)_{(j^2y)(v)} = 0.
\end{equation}
\end{cor}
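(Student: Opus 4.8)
The plan is to derive the statement directly from the variational formula \eqref{formulavariacion-prob-lag}, together with Definition \ref{def:estacionaridad} and the incidence property \eqref{incidente-accion}, exactly along the lines of the unconstrained Euler-Lagrange derivation of Section 5 in \cite{lagdisc-jgp}.

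First I would observe that admissibility of $y$ occurs on both sides of the claimed equivalence and is already built into Definition \ref{def:estacionaridad}; hence it suffices to show that, for a fixed admissible $y\in\Gamma(\V_0,Y)$, the form $\big(\mathbb{A}_\V(\Lag,\Phi)\big)_{(y,\lambda)}$ vanishes on $T_y^c\big(\Gamma(\V_0,Y)\big)\oplus T_\lambda\big(\textrm{Map}(\V,\alglie^*)\big)$ if and only if \eqref{EL-prob-ligado} holds for every $v\in\interior\V$. For an arbitrary $(\delta y,\delta\lambda)$ in that subspace, the incidence \eqref{incidente-accion} removes the $\delta\lambda$-contribution, and the fact that $\delta y$ vanishes on $\fr\V$ kills every term of the boundary sum $\sum_{(v\in\fr\V)\prec\alpha\in\V}\big(\Theta_\alpha^v(\Lag)+\lambda(\alpha)\circ\Theta_\alpha^v(\Phi)\big)_{(j^1y)(\alpha)}(j^1\delta y)(\alpha)$ in \eqref{formulavariacion-prob-lag}, since $\Theta_\alpha^v$ pairs $(j^1\delta y)(\alpha)$ against its $v$-component only. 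Thus \eqref{formulavariacion-prob-lag} reduces to
\[
\big(\mathbb{A}_\V(\Lag,\Phi)\big)_{(y,\lambda)}(\delta y,\delta\lambda)=\sum_{v\in\interior\V}\Big(\EL_v(\Lag)+\sum_{\alpha\in S_v}\lambda(\alpha)\circ\pi^*_{v\alpha}\Theta_\alpha^v(\Phi)\Big)_{(j^2y)(v)}(j^2\delta y)(v).
\]

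From this identity one implication is immediate: if \eqref{EL-prob-ligado} holds at every interior vertex, every summand vanishes and so the right-hand side is zero for all admissible variations, which together with the standing admissibility of $y$ is stationarity in the sense of Definition \ref{def:estacionaridad}. For the converse I would argue as at the end of the proof of Theorem \ref{thm:carac-critica-lag-prob}: the values $(\delta y)_v$ for $v\in\interior\V$ may be prescribed arbitrarily and independently within $T_y^c\big(\Gamma(\V_0,Y)\big)$, so the vanishing of the sum for all such variations forces each summand to vanish separately; here one uses that, by construction of $\EL_v(\Lag)$ and of the projections $\pi_{v\alpha}$ (which retain only the values over the vertices adherent to $\alpha$), the $v$-th summand depends on $\delta y$ only through $(\delta y)_v$. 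This last point --- the discrete fundamental lemma of the calculus of variations isolating the contribution of $(\delta y)_v$ --- is the only step needing care, and it is exactly the one already settled for $\EL_v(\Lag)$ in Section 3 of \cite{lagdisc-jgp}; the extra term involving $\Theta_\alpha^v(\Phi)$ inherits it, so no genuinely new difficulty arises beyond the unconstrained case.
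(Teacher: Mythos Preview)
Your argument is correct and is exactly the route the paper intends: the corollary is stated immediately after Proposition \ref{prop:formula-variacion} with no separate proof, under the blanket phrase ``Following the route of Section 5 of \cite{lagdisc-jgp} with the obvious changes'', so the paper's own proof is precisely the one you spell out --- apply Definition \ref{def:estacionaridad}, use \eqref{incidente-accion} to discard the $\delta\lambda$-part, kill the boundary sum because $\Theta_\alpha^v$ sees only the $v$-component of $\delta y$, and then invoke the discrete fundamental lemma at interior vertices as in the end of the proof of Theorem \ref{thm:carac-critica-lag-prob}. Your observation that the added term $\lambda(\alpha)\circ\pi_{v\alpha}^*\Theta_\alpha^v(\Phi)$ also pairs only against $(\delta y)_v$ (by the very definition \eqref{cartan-phi} of $\Theta_\alpha^v(\Phi)$) is the one point beyond the unconstrained case, and you handle it correctly.
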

\begin{thm}[Lagrange Multiplier Rule]
A  section-mapping  $ (y,\lambda) \in \Gamma(\V_0,Y) \times$ \break $\textrm{Map}(\V,\alglie^*)$, being $y$ an admissible regular section, is critical for the action 1-form  $\mathbb{A}_\V (\Lag,\Phi)$ if and only if its component $y\in \Gamma(\V_0,Y)$ is an admissible regular critical section for the Lagrange problem with Lagrangian $\Lag:J^1Y \to \R$ and constraint submanifold $S=\Phi^{-1}(e)\subset J^1Y$, being its component $\lambda$ the multiplier associated to $y$.
\end{thm}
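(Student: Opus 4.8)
The plan is to prove the two implications by matching, term by term, the characterization of critical admissible sections in Theorem \ref{thm:carac-critica-lag-prob} with the Euler--Lagrange equations of the extended problem in Corollary \ref{cor:Euler-Lagrange}. Both statements are phrased in terms of the same objects: the Euler--Lagrange $1$-form $\EL_v(\Lag)$, the Cartan $1$-forms $\Theta_\alpha^v(\Phi)$ pulled back by $\pi_{v\alpha}$, and a mapping $\lambda\in\textrm{Map}(\V,\alglie^*)$. So the proof is essentially an identification of the two sets of equations once one checks that ``admissible'' on both sides means the same thing.

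First I would treat the forward implication. Suppose $(y,\lambda)$ is critical for $\mathbb{A}_\V(\Lag,\Phi)$ with $y$ admissible and regular. By Corollary \ref{cor:Euler-Lagrange}, admissibility of $y$ gives $\textrm{img}(j^1y)\subset S$ (this is part of the definition of critical for the extended action, via Definition \ref{def:estacionaridad}), and for every $v\in\interior\V$ equation \eqref{EL-prob-ligado} holds, namely $\big(\EL_v(\Lag)+ \sum_{\alpha\in S_v} \lambda(\alpha) \circ \pi^*_{v\alpha}\Theta_\alpha^v(\Phi)\big)_{(j^2y)(v)} = 0$. Since $\lambda(\alpha)\circ\pi^*_{v\alpha}\Theta_\alpha^v(\Phi) = \pi^*_{v\alpha}\big(\lambda(\alpha)\circ\Theta_\alpha^v(\Phi)\big)$, this is precisely equation \eqref{multiplicador}. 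Hence by Theorem \ref{thm:carac-critica-lag-prob} the section $y$ is an admissible critical section for the Lagrange problem, and since $y$ is regular the mapping satisfying \eqref{multiplicador} is unique and equals the multiplier associated to $y$; but $\lambda$ satisfies it, so $\lambda$ is that multiplier.

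Conversely, suppose $y\in\Gamma_S(\V_0,Y)\big(y_0(\fr\V)\big)$ is an admissible regular critical section for the Lagrange problem and $\lambda$ is its associated multiplier. By Theorem \ref{thm:carac-critica-lag-prob}, $\lambda$ satisfies \eqref{multiplicador} for every $v\in\interior\V$, which is exactly \eqref{EL-prob-ligado}; together with the admissibility of $y$, Corollary \ref{cor:Euler-Lagrange} yields that $(y,\lambda)$ is critical for the action $1$-form $\mathbb{A}_\V(\Lag,\Phi)$. This closes the equivalence. The only points that require a line of care are: (i) that in both formulations the regularity hypothesis on $y$ is what guarantees uniqueness of $\lambda$, so that the multiplier is well defined and the correspondence $y\leftrightarrow(y,\lambda)$ is unambiguous; and (ii) that the duality-pairing bookkeeping $\lambda(\alpha)\circ\pi^*_{v\alpha}\Theta_\alpha^v(\Phi) = \pi^*_{v\alpha}\big(\lambda(\alpha)\circ\Theta_\alpha^v(\Phi)\big)$ is just naturality of pullback, since $\lambda(\alpha)\in\alglie^*$ is a constant along the fiber. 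I do not expect a real obstacle here: the substantive analytic content (the Inverse Function Theorem argument, the exact sequence \eqref{secuencia-exacta}, and the variational formula \eqref{formulavariacion-prob-lag}) has already been absorbed into Theorem \ref{thm:carac-critica-lag-prob} and Corollary \ref{cor:Euler-Lagrange}; the present theorem is the formal dictionary between them, so the main ``difficulty'' is simply to state the identification cleanly and invoke uniqueness of the multiplier in both directions.
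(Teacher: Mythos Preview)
Your proposal is correct and is precisely the natural argument: the paper itself states this theorem without proof (it is listed immediately after Corollary \ref{cor:Euler-Lagrange} under the blanket remark ``Following the route of Section 5 of \cite{lagdisc-jgp} with the obvious changes''), and the intended justification is exactly the identification you carry out between equations \eqref{multiplicador} and \eqref{EL-prob-ligado}, together with the uniqueness of the multiplier supplied by regularity in Theorem \ref{thm:carac-critica-lag-prob}.
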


On the other hand, the expression of the boundary term of the variation formula \eqref{formulavariacion-prob-lag} suggests to take as  Cartan 1-form associated to the  action 1-form $\mathbb{A}_\V(\Lag,\phi)$ the family of 1-forms:
$$
\Theta_\alpha^v(\Lag) + \lambda_\alpha \circ \Theta_\alpha^v(\Phi), \quad v\in\V_0, \quad \alpha \in \V_n, \quad  v\prec \alpha.
$$

This concept will allow us to formulate a Noether's theory of  symmetries for the  Lagrange problems with constraints valued in Lie groups, as well as to establish a corresponding formula of the  multisymplectic form.

\begin{dft}
An infinitesimal symmetry of a Lagrange problem with Lagrangian density $\Lag: J^1Y \to \R$ and constraint submanifold $S= \Phi^{-1}(e)$, with $\Phi: J^1 Y \to G$,  is a vector field $D\in \mathfrak{X}(Y)$ such that $(j^1D)\Lag=0$ and $(j^1D)(\Phi)= (d\Phi)(j^1D)=0$ where:
$$
(d\Phi)(j^1 D): j^1_\alpha y\in (J^1Y)_\alpha \mapsto (d\Phi)_{j^1_\alpha y}(j^1D)_{j^1_\alpha y} \in T_{\Phi(j^1_\alpha y)} G . 
$$
\end{dft}

\begin{thm}[Noether]
If $y\in \Gamma_S(\V_0,Y)$ is a regular critical section of the Lagrange problem given by $(\Lag, \Phi)$ and $D\in \mathfrak{X}(Y)$ is an infinitesimal symmetry, then:
$$
\sum_{(v\prec \fr\V)\prec \alpha} \left[ \big( \Theta^v_\alpha (\Lag) + \lambda (\alpha)\circ \Theta ^v_\alpha (\Phi)\big) ( j^1D)_\alpha \right] (j^1y)(\alpha)= 0,
$$
where $\lambda \in \textrm{Map} (\V,\alglie^*)$ is the multiplier associated to the regular critical section.
\end{thm}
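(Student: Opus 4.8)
The plan is to derive Noether's identity directly from the Variational Formula in Proposition \ref{prop:formula-variacion}, applied to the particular infinitesimal variation induced by the symmetry vector field $D$. First I would take the section-mapping $(y,\lambda)$, where $y$ is the given regular critical section and $\lambda$ its associated multiplier, and evaluate the action 1-form $\mathbb{A}_\V(\Lag,\Phi)$ at $(y,\lambda)$ on the tangent vector $(\delta y, 0)$, where $\delta y \in T_y\big(\Gamma(\V_0,Y)\big)$ is the infinitesimal variation determined by the flow of $D$ on $Y$ (so that $j^1\delta y = j^1 D \circ j^1 y$ and $j^2\delta y = j^2 D \circ j^2 y$). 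Note that here $\delta y$ need \emph{not} vanish on $\fr\V$, so both terms of \eqref{formulavariacion-prob-lag} survive.

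Next I would evaluate the left-hand side of \eqref{formulavariacion-prob-lag} on this variation. Since $D$ is an infinitesimal symmetry, $(j^1D)\Lag = 0$ and $(d\Phi)(j^1D) = 0$; the latter gives $\big(\theta \circ d\Phi_\alpha\big)_{(j^1y)(\alpha)}(j^1D)_{(j^1y)(\alpha)} = 0$ for every $\alpha \in \V$. Plugging into the definition \eqref{accion-extendida} of $\mathbb{A}_\V(\Lag,\Phi)$, term by term over $\alpha \in \V$, both summands vanish, so $\big(\mathbb{A}_\V(\Lag,\Phi)\big)_{(y,\lambda)}(\delta y, 0) = 0$. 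On the right-hand side of \eqref{formulavariacion-prob-lag}, the interior sum vanishes because $y$ is critical with associated multiplier $\lambda$, so equation \eqref{EL-prob-ligado} of Corollary \ref{cor:Euler-Lagrange} holds at every $v \in \interior\V$ (equivalently \eqref{multiplicador} of Theorem \ref{thm:carac-critica-lag-prob}). What remains is exactly the boundary term
$$
\sum_{(v\in\fr\V)\prec\alpha\in\V}\big(\Theta_\alpha^v(\Lag) + \lambda(\alpha)\circ\Theta_\alpha^v(\Phi)\big)_{(j^1y)(\alpha)}(j^1\delta y)(\alpha) = 0,
$$
and since $(j^1\delta y)(\alpha) = (j^1D)_{(j^1y)(\alpha)}$, this is the asserted conservation law once one rewrites the index set $(v\in\fr\V)\prec\alpha$ as $(v\prec\fr\V)\prec\alpha$ in the paper's notation.

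I expect the only genuine subtlety to be bookkeeping: first, checking carefully that the infinitesimal variation generated by $D$ is a legitimate element of $T_y\big(\Gamma(\V_0,Y)\big)$ and that its jet extensions are computed by composing with $j^1D$ and $j^2D$ as claimed — this is the discrete analogue of the standard fact in the continuous theory and follows from the definitions of $j^1y$, $j^2y$ in Section \ref{sec:lig-grupo-lie}. Second, one must be slightly careful that Proposition \ref{prop:formula-variacion} is being applied to a variation \emph{not} vanishing on the frontier, so that the boundary term is retained; but the formula \eqref{formulavariacion-prob-lag} as stated is valid for arbitrary tangent vectors, so no extra work is needed. Everything else is a direct substitution, so the proof is essentially a one-line consequence of the variational formula once the symmetry conditions are used to kill the left-hand side and the Euler-Lagrange equations to kill the interior term; there is no real obstacle.
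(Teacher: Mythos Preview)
Your proposal is correct and follows essentially the same approach as the paper: define $(\delta y)(v)=(D_v)_{y(v)}$, use the symmetry conditions $(j^1D)\Lag=0$ and $(d\Phi)(j^1D)=0$ in \eqref{accion-extendida} to kill the left-hand side of the variation formula \eqref{formulavariacion-prob-lag}, then invoke Theorem~\ref{thm:carac-critica-lag-prob} (equivalently \eqref{EL-prob-ligado}) to kill the interior sum, leaving the boundary term as the conservation law. The paper's proof is exactly this argument with no additional ingredients.
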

\begin{proof}
If we call $(\delta y)(v)=(D_v)_{y(v)}$, $v\in \V_0$, then $(j^1\delta y)(\alpha)=\big( (j^1D)_\alpha\big)_{(j^1y)(\alpha)}$, $\alpha\in \V$, and we obtain $\big(\mathbb{A}_\V(\Lag,\Phi)\big)_{(y,\lambda)}(\delta y,0)=0$ by \eqref{accion-extendida} and for being $D$ an infinitesimal symmetry of the Lagrange problem. On the other hand, if $y\in \Gamma_S(\V_0,Y)$ is a regular critical section of the  Lagrange problem with associated multiplier  $\lambda\in \textrm{Map}(\V,\alglie^*)$, then for any $v\in \interior\V$ Theorem \ref{thm:carac-critica-lag-prob} provides:
$$
\left( \EL_v(\Lag) + \sum_{\alpha\in S_v} \lambda (\alpha)\circ \pi^*_{v\alpha}\Theta^v_\alpha (\Phi)\right)_{(j^2y)(v)} = 0.
$$

Now the result follows if we substitute these both zero terms in the variation formula \eqref{formulavariacion-prob-lag}. 
\end{proof}

As for the establishment of a multisymplectic form formula  for this kind of Lagrange problems we will proceed as follows.

The variation formula \eqref{formulavariacion-prob-lag} can be expressed on the manifold $\Gamma(\V_0,Y) \times \textrm{Map}(\V,\alglie^*)$ as:
\begin{equation}\label{formulavariacion-prob-lag-sin-evaluar}
\mathbb{A}_\V (  \Lag, \Phi  ) =   \sum_{v\in\interior\V}\big( \EL_v(\Lag)+ \sum_{\alpha\in S_v} \lambda_\alpha \circ \Theta_\alpha^v(\Phi)\big)  +  \sum_{(v\in \fr\V)\prec \alpha\in \V}\big( \Theta_\alpha^v(\Lag) + \lambda_\alpha \circ \Theta_\alpha^v(\Phi)\big),
\end{equation}
where the terms of this sum are considered 1-forms on $\Gamma(\V_0,Y) \times \textrm{Map}(\V,\alglie^*)$ through the canonical projections  from $\Gamma(\V_0,Y) \times \textrm{Map}(\V,\alglie^*)$ to $(J^1Y\times_\V \fibG)_\alpha$ and $(J^2Y\times_{\V_0}j^1\fibG)_v$ respectively, $(y,\lambda) \mapsto \big( (j^1y)(\alpha), e^* (\alpha)=(\alpha,\lambda(\alpha))\big)$ and  $(y,\lambda) \mapsto \big( (j^2y)(v), j^1e^* (v)\big)$, being $j^1\fibG\to \V_0$ the 1-jet bundle of $\fibG \to \V$ defined by $(j^1\fibG)_v= \prod_{\alpha\in S_v} \fibG_\alpha$, $v\in \V_0$, and where $j^1e^*: v\in \V_0\mapsto \big(e^*(\alpha)\big)$, $\alpha\in S_v$, denotes the 1-jet extension of $e^* \in \Gamma(\V, \fibG)$.

If $\EL(\Lag,\Phi)$ is the 1-form on $\Gamma(\V_0,Y) \times \textrm{Map}(\V,\alglie^*)$:
\begin{equation}\label{EL-extendido}
\EL(\Lag, \Phi)= \sum_{v\in \interior\V} \big( \EL_v(\Lag) + \sum_{\alpha\in S_v} (\lambda_\alpha \circ \Theta_\alpha^v )(\Phi)\big)
\end{equation}
then the critical section-mappings $(y,\lambda)\in \Gamma(\V_0,Y) \times \textrm{Map}(\V,\alglie^*)$ are characterized through Corollary \ref{cor:Euler-Lagrange} by the conditions:
$$
y \quad \textrm{  admisible  and }\quad  \big(\EL(\Lag,\Phi)\big)_{(y,\lambda)}=0.
$$

\textbf{Local expression.}  Let $\{y^i_v\vert 1\le i\le m\}$ be a local coordinate system  in a neighborhood of each $y_v\in Y_v$ and $\{y^i_v\vert 1\le i\le m, v\prec \alpha\}$, the local coordinate system  induced in $(J^1Y)_\alpha$.  On the other hand, let $\{g^I\vert 1\le I\le l\}$, $l=\dim G$, be a local system of coordinates of $G$  in a neighborhood of $e\in G$ and $\big\{ \left(\parcial{}{g^I}\right)_e \vert 1\le I\le l\big\}$ and $\big\{ (dg^I)_e\vert 1\le I\le l \big\}$ the induced basis into the Lie algebra $\alglie$ and its dual $\alglie^*$ respectively. If $L_g:G\to G$ is the left translation by $g\in G$, $L_g(\overline{g})=g\cdot\overline{g}$, $L_g^I:G\to \R$ is the function $L_g^I= g^I\circ L_g$ and $\Phi_\alpha^I= g^I\circ \Phi_\alpha$, $1\le I\le l$, then for any $\alpha\in \V_n$, $\theta\circ d\Phi_\alpha$ is the 1-form over $(J^1Y)_\alpha$ with values in the Lie algebra $\alglie$ given by:
\begin{equation}\label{expr-local}
(\theta \circ d\Phi_\alpha)_{j^1_\alpha y} = \sum_{\substack{v\prec\alpha \\ 1\le I \le l \\1\le i \le m}} \left(\sum_{J=1}^l\bigg( \parcial{L^I_{g^{-1}}}{g^J}\bigg)_{g} \bigg(\parcial{\Phi^J_\alpha }{y^i_v}\bigg)_{j^1_\alpha y}\right) (dy^i_v)_{j^1_\alpha y} \otimes \Big(\parcial{}{g^I}\Big)_e
\end{equation}
for $j^1_\alpha y \in (J^1Y)_\alpha$ and $g= \Phi_\alpha(j^1_\alpha y) \in G$.

From here it follows that:
\begin{equation}\label{eulerlagrange-coord-local}
 \EL(\Lag, \Phi) = \sum_{v\in \interior \V}\sum_{i=1}^m \EL^i_v(\Lag, \Phi) dy^i_v ,
 \end{equation}
where, for $\lambda(\alpha)= \sum_{1\le I\le l}\lambda(\alpha)^I(dg^I)_e$:
$$
\EL^i_v(\Lag, \Phi) = \sum_{\alpha\in S_v} \parcial{\Lag_\alpha}{y^i_v} +  \sum_{I,J=1}^l\lambda(\alpha)^I  \bigg( \parcial{L^I_{g^{-1}}}{g^J}\bigg) \bigg(\parcial{\Phi^J_\alpha }{y^i_v}\bigg).
$$

\begin{dft}
A Jacobi field along a critical section-mapping $(y,\lambda)\in \Gamma(\V_0,Y)\times$ \break $\textrm{Map}(\V,\alglie^*)$ of the action 1-form $\mathbb{A}_\V ( \Lag, \Phi )$ is a tangent vector $(\delta y,\delta \lambda) \in T_{(y,\lambda)}\big( \Gamma(\V_0,Y)\times \textrm{Map}(\V,\alglie^*) \big) $ that verifies: $\delta y$ is admissible and $(\delta y,\delta \lambda)$ has an extension $D\in \mathfrak{X}\big( \Gamma(\V_0,Y) \times \textrm{Map}(\V,\alglie^*)\big)$ such that:
\begin{equation}\label{ecuacion-jacobi}
 \big(L_D \EL(\Lag,\Phi)\big)_{(y,\lambda)}=0.
 \end{equation}
\end{dft}

The definition is correct since from \eqref{eulerlagrange-coord-local} we have:
$$
L_D\EL(\Lag,\Phi) = \sum_{v\in\interior\V} (D\EL^i_v) dy^i_v + \EL^i_v(\Lag,\Phi) L_D dy^i_v ,
$$
and therefore, evaluating this expression at $(y,\lambda)$, the second term vanishes because $(y,\lambda)$ is critical and the first one depends only on $(\delta y,\delta \lambda)$.

Continuing with this approach, a key point for obtaining  a multisymplectic form  formula for this new class of Lagrange problems is the following property.

\begin{prop}\label{prop:deriv-accion-ext}
If $(y,\lambda)\in \Gamma(\V_0,Y)\times\textrm{Map}(\V,\alglie^*)$ is a section-mapping with $y$ admissible and $\big( (\delta y)^i, (\delta\lambda)^i\big) \in T_{(y,\lambda)} \big( \Gamma(\V_0,Y)\times\textrm{Map}(\V,\alglie^*)\big) $ are two tangent vectors at $(y,\lambda)$ with $(\delta y)^i$ admissible, $i=1,2$, we have:
$$
\big(d\mathbb{A}_\V (\Lag, \Phi)\Big) _{(y,\lambda)} \big( \big( (\delta y)^1, (\delta\lambda)^1\big), \big( (\delta y)^2, (\delta\lambda)^2\big)\big) = 0.
$$
\end{prop}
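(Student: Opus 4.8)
The plan is to reduce the computation of $d\mathbb{A}_\V(\Lag,\Phi)$ to the finite-dimensional fibres indexed by the faces $\alpha\in\V$: there the only non-exact part of the action $1$-form is a $1$-form $\eta_\alpha$ built from the Maurer-Cartan form of $G$, and we will see that $d\eta_\alpha$ lies in the exterior ideal generated by $\theta\circ d\Phi_\alpha$, a collection of forms that admissible variations annihilate. Concretely, let $\mathrm{pr}\colon\Gamma(\V_0,Y)\times\textrm{Map}(\V,\alglie^*)\to\Gamma(\V_0,Y)$ be the projection, and for $\alpha\in\V$ let $p_\alpha\colon\Gamma(\V_0,Y)\times\textrm{Map}(\V,\alglie^*)\to(J^1Y)_\alpha\times\alglie^*$ be the canonical projection $(y,\lambda)\mapsto\big((j^1y)(\alpha),\lambda(\alpha)\big)$ used after \eqref{formulavariacion-prob-lag-sin-evaluar}, and let $\eta_\alpha$ be the $1$-form on $(J^1Y)_\alpha\times\alglie^*$ given at $(\xi,\mu)$ by $(\eta_\alpha)_{(\xi,\mu)}(X,\nu)=\mu\circ(\theta\circ d\Phi_\alpha)_\xi(X)$. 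Then \eqref{accion-extendida} says
\begin{equation*}
\mathbb{A}_\V(\Lag,\Phi)=\mathrm{pr}^*\big(d\mathcal{A}_\V(\Lag)\big)+\sum_{\alpha\in\V}p_\alpha^*\eta_\alpha,
\end{equation*}
since the first sum in \eqref{accion-extendida} depends only on $\delta y$ and coincides with $(d\mathcal{A}_\V(\Lag))_y(\delta y)$. As $\mathrm{pr}^*\big(d\mathcal{A}_\V(\Lag)\big)$ is exact, taking exterior derivatives gives $d\mathbb{A}_\V(\Lag,\Phi)=\sum_{\alpha\in\V}p_\alpha^*\,d\eta_\alpha$, so it is enough to prove that each $p_\alpha^*\,d\eta_\alpha$ annihilates the prescribed pair of tangent vectors.

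Next I would compute $d\eta_\alpha$ on $(J^1Y)_\alpha\times\alglie^*$. Since $\eta_\alpha$ is the duality pairing of the $\alglie^*$-valued coordinate $\lambda_\alpha$ with $\theta\circ d\Phi_\alpha=\Phi_\alpha^*\theta$, the Maurer-Cartan structure equation $d\theta=-\tfrac12[\theta,\theta]$ yields
\begin{equation*}
d\eta_\alpha=d\lambda_\alpha\barwedge(\theta\circ d\Phi_\alpha)-\tfrac12\,\lambda_\alpha\circ\big[\theta\circ d\Phi_\alpha,\theta\circ d\Phi_\alpha\big].
\end{equation*}
Every term on the right-hand side is a wedge product carrying at least one factor that is a scalar component of $\theta\circ d\Phi_\alpha$; hence, for any two tangent vectors $w_1,w_2$ at a point of $(J^1Y)_\alpha\times\alglie^*$ with $(\theta\circ d\Phi_\alpha)(w_1)=(\theta\circ d\Phi_\alpha)(w_2)=0$ we have $(d\eta_\alpha)(w_1,w_2)=0$.

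To conclude, let $(y,\lambda)$ have $y$ admissible and let the tangent vectors $\big((\delta y)^i,(\delta\lambda)^i\big)$ have $(\delta y)^i$ admissible, $i=1,2$. The differential of $p_\alpha$ sends $\big((\delta y)^i,(\delta\lambda)^i\big)$ to $w^i_\alpha=\big((j^1(\delta y)^i)(\alpha),(\delta\lambda)^i(\alpha)\big)$, and by Definition \ref{def:var-infinitesimal} the admissibility of $(\delta y)^i$ gives $(d\Phi_\alpha)_{(j^1y)(\alpha)}\big((j^1(\delta y)^i)(\alpha)\big)=0$, hence $(\theta\circ d\Phi_\alpha)(w^i_\alpha)=0$ for $i=1,2$. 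By the previous paragraph $(p_\alpha^*\,d\eta_\alpha)_{(y,\lambda)}$ vanishes on the pair $\big(\big((\delta y)^1,(\delta\lambda)^1\big),\big((\delta y)^2,(\delta\lambda)^2\big)\big)$ for every $\alpha\in\V$, and summing over $\alpha\in\V$ proves the proposition.

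I expect the main obstacle to be the computation of $d\eta_\alpha$ together with the key remark that, by the Maurer-Cartan equation, $d(\theta\circ d\Phi_\alpha)=\Phi_\alpha^*(d\theta)$ stays inside the exterior ideal generated by $\theta\circ d\Phi_\alpha$. This is exactly where the Lie group structure of the target is decisive, and it is also why the statement must be restricted to admissible variations, in contrast with the vector-valued case of \cite{lagdisc-jgp}, where the analogous $d\mathbb{A}_\V$ vanishes identically because there the action $1$-form is fibrewise exact.
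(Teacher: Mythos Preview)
Your proof is correct and follows the same overall architecture as the paper: write $\mathbb{A}_\V(\Lag,\Phi)$ as a face-wise sum, kill the $d\Lag_\alpha$ contribution by exactness, and split the remainder $d\eta_\alpha$ into the piece $d\lambda_\alpha\barwedge(\theta\circ d\Phi_\alpha)$ and the piece $\lambda_\alpha\circ d(\theta\circ d\Phi_\alpha)$, each of which vanishes on pairs of admissible variations.

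The one genuine difference is how you handle the second piece. The paper computes $d(\theta\circ d\Phi_\alpha)$ in local coordinates (formula \eqref{expr-local}), obtaining a sum of terms each carrying a factor $d\Phi_\alpha^J$, and then invokes admissibility to kill that factor. You instead observe that $\theta\circ d\Phi_\alpha=\Phi_\alpha^*\theta$ and apply the Maurer--Cartan equation $d\theta=-\tfrac12[\theta,\theta]$ directly, so that $d(\theta\circ d\Phi_\alpha)=-\tfrac12[\theta\circ d\Phi_\alpha,\theta\circ d\Phi_\alpha]$ is visibly in the exterior ideal generated by $\theta\circ d\Phi_\alpha$. Your route is coordinate-free and makes transparent why the Lie group structure of the target is what drives the result; the paper's coordinate computation reaches the same conclusion but obscures this structural point. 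Both arguments are complete, and yours is the cleaner of the two.
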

\begin{proof}
From \eqref{accion-extendida}:
$$
\mathbb{A}_\V(\Lag,\Phi)=  \sum_{\alpha\in \V} d\Lag_\alpha + \lambda_\alpha \circ (\theta \circ d\Phi_\alpha)
$$
where the terms of this sum are considered 1-forms over $\Gamma(\V_0,Y)\times\textrm{Map}(\V,\alglie^*)$ via the canonical projection  $(y,\lambda) \mapsto \big( (j^1y)(\alpha), e^* (\alpha)=(\alpha,\lambda(\alpha))\big)$ from  $\Gamma(\V_0,Y) \times \textrm{Map}(\V,\alglie^*)$ to $(J^1Y\times_\V \fibG)_\alpha$.

From here we get:
$$
d\mathbb{A}_\V(\Lag,\Phi)=  \sum_{\alpha\in \V} \underbrace{d^2\Lag_\alpha}_{=0} + d\lambda_\alpha \barwedge (\theta \circ d\Phi_\alpha)+ \lambda_\alpha \circ d(\theta \circ d\Phi_\alpha),
$$
where, as before, $\barwedge$ is the wedge product of valued forms in $\alglie^*$ with forms valued in $\alglie$, applying the duality pairing. This gives:
\begin{align*}
\big(d\mathbb{A}_\V(\Lag,\Phi)\big)_{(y,\lambda)} & \big( \big( (\delta y)^1, (\delta\lambda)^1\big), \big( (\delta y)^2, (\delta\lambda)^2\big)\big)   = \\
&  \sum_{\alpha\in \V} \big( d\lambda_\alpha \barwedge (\theta \circ d\Phi_\alpha)\big)_{(y,\lambda)} \big( \big( (\delta y)^1, (\delta\lambda)^1\big), \big( (\delta y)^2, (\delta\lambda)^2\big)\big)  \\
& + \big( \lambda_\alpha \circ d(\theta \circ d\Phi_\alpha)\big)_{(y,\lambda)} \big( \big( (\delta y)^1, (\delta\lambda)^1\big), \big( (\delta y)^2, (\delta\lambda)^2\big)\big) .
\end{align*}

The first term vanishes because $(\delta y)^i$, $i=1,2$ are admissible. Indeed:
\begin{align*}
\sum_{\alpha\in \V}&   \big(  d\lambda_\alpha  \barwedge (\theta \circ d\Phi_\alpha)\big)_{(y,\lambda)} \big( \big( (\delta y)^1, (\delta\lambda)^1\big), \big( (\delta y)^2, (\delta\lambda)^2\big)\big)  =  \\
 \sum_{\alpha\in \V} & \Big(  (d\lambda_\alpha)_{(y,\lambda)} \big( (\delta y)^1, (\delta\lambda)^1\big) \cdot (\theta \circ d\Phi_\alpha))_{(y,\lambda)} \big( (\delta y)^2, (\delta\lambda)^2\big)  \\
& - ( d\lambda_\alpha)_{(y,\lambda)} \big( (\delta y)^2, (\delta\lambda)^2\big) \cdot (\theta \circ d\Phi_\alpha))_{(y,\lambda)} \big( (\delta y)^1, (\delta\lambda)^1\big) \Big) = 0
\end{align*}

since we know from the admissibility of $(\delta y)^i$, by Definition 2.2, that 
$$(d\Phi_\alpha)_{(j^1y)(\alpha)} (j^1\delta y^i)(\alpha) \big) =0,$$
for $i=1,2$, and so:
$$
(\theta \circ d\Phi_\alpha))_{(y,\lambda)} \big( (\delta y)^i, (\delta\lambda)^i\big) = \theta_{\Phi_\alpha((j^1y)(\alpha))} \big( (d\Phi_\alpha)_{(j^1y)(\alpha)} (j^1\delta y^i)(\alpha) \big) =0 \quad i=1,2.
$$

Regarding the second term, from the local expression \eqref{expr-local} we obtain:
\begin{align*}
d(\theta \circ d\Phi_\alpha)  = & \left( d\left( \sum_{v\prec \alpha, I, i} \left(\sum_{J}\bigg( \parcial{L^I_{g^{-1}}}{g^J}\bigg) \bigg(\parcial{\Phi^J_\alpha }{y^i_v}\bigg)\right) \right)\wedge dy^i_v \right)\otimes \Big(\parcial{}{g^I}\Big)_e \\
= &  \sum_{ I,J}  \left( d \bigg( \parcial{L^I_{g^{-1} }}{g^J}\bigg) \wedge d\Phi^J_\alpha +  \bigg( \parcial{L^I_{g^{-1}}}{g^J}\bigg)  \underbrace{d^2\Phi^J_\alpha}_{=0} \right)\otimes \Big(\parcial{}{g^I}\Big)_e 
\end{align*}
from which, if $\lambda(\alpha)= \sum_{I}\lambda(\alpha)^I (dg^I)_e$ as before, we get:
\begin{align*}
\sum_{\alpha\in \V} &  \big( \lambda_\alpha \circ d(\theta \circ d\Phi_\alpha)\big)_{(y,\lambda)} \big( \big( (\delta y)^1, (\delta\lambda)^1\big), \big( (\delta y)^2, (\delta\lambda)^2\big)\big) =\\
\sum_{\alpha\in \V, I,J} & \lambda(\alpha)^I  \left( d \bigg( \parcial{L^I_{g^{-1}}}{g^J}\bigg) \wedge d\Phi^J_\alpha \right)_{(j^1y)(\alpha)}  \big( j^1(\delta y)^1(\alpha) ,j^1(\delta y)^2(\alpha) \big)  =0
\end{align*}
because of, again, the admissibility of  $(\delta y)^i$, $i=1,2$. 
\end{proof}

\begin{rmk}
 
 As noted in Remark \ref{rmk:sin-lag-extendido}, the action 1-form $\mathbb{A}_\V(\Lag,\Phi)$ generalizes to discrete Lagrange problems valued in a Lie group the  differential, $d\mathbb{A}_\V(\widehat{\Lag})$,  of the action function $\mathbb{A}_\V(\widehat{\Lag})$  of the extended Lagrangian $\widehat{\Lag}= \Lag + \lambda \circ \Phi$ of discrete Lagrange problems valued in a vector space. In this case, Proposition \ref{prop:deriv-accion-ext} is trivial because:
 
 $$ d\big(d\mathbb{A}_\V(\widehat{\Lag}) \big) = d^2\mathbb{A}_\V(\widehat{\Lag})=0. $$
\end{rmk}

At this point we finally have:

\begin{thm}[Multisymplectic form formula]
If  $(y,\lambda) \in \Gamma(\V_0,Y) \times \textrm{Map}(\V,\alglie^*)$ is a critical section-mapping  of the action 1-form $\mathbb{A}_\V(\Lag, \Phi)$ and $\big( (\delta y)^{i},(\delta\lambda)^{i}\big)\in  T_{(y,\lambda)}\big( \Gamma(\V_0,Y) \times \textrm{Map}(\V,\alglie^*)\big)$, $i=1,2$,  are two Jacobi fields along $(y,\lambda)$, then we have:
$$
\sum_{(v\prec \fr \V)\prec \alpha} \Big( d \big( \Theta^v_\alpha (\Lag) + \lambda_\alpha \circ \Theta ^v_ \alpha (\Phi)\big) \Big)_{(y,\lambda)} \big( \big( (\delta y)^1, (\delta\lambda)^1\big), \big( (\delta y)^2, (\delta\lambda)^2\big)\big) = 0.
$$
\end{thm}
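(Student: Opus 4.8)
## Proof proposal for the multisymplectic form formula

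The plan is to integrate the variational formula \eqref{formulavariacion-prob-lag-sin-evaluar} against two Jacobi fields and differentiate, following the standard pattern of multisymplectic form formulas in discrete field theory. First I would rewrite \eqref{formulavariacion-prob-lag-sin-evaluar} abstractly as
$$
\mathbb{A}_\V(\Lag,\Phi) = \EL(\Lag,\Phi) + \sum_{(v\in\fr\V)\prec\alpha\in\V}\big(\Theta_\alpha^v(\Lag)+\lambda_\alpha\circ\Theta_\alpha^v(\Phi)\big),
$$
where $\EL(\Lag,\Phi)$ is the 1-form \eqref{EL-extendido} on $\Gamma(\V_0,Y)\times\textrm{Map}(\V,\alglie^*)$. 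Applying the exterior derivative $d$ on the manifold $\Gamma(\V_0,Y)\times\textrm{Map}(\V,\alglie^*)$ to both sides gives
$$
d\mathbb{A}_\V(\Lag,\Phi) = d\,\EL(\Lag,\Phi) + \sum_{(v\in\fr\V)\prec\alpha\in\V} d\big(\Theta_\alpha^v(\Lag)+\lambda_\alpha\circ\Theta_\alpha^v(\Phi)\big),
$$
so that the boundary term we want to control equals $d\mathbb{A}_\V(\Lag,\Phi) - d\,\EL(\Lag,\Phi)$. It therefore suffices to show that both $d\mathbb{A}_\V(\Lag,\Phi)$ and $d\,\EL(\Lag,\Phi)$, evaluated at the critical section-mapping $(y,\lambda)$ on the pair of Jacobi fields $\big((\delta y)^1,(\delta\lambda)^1\big)$, $\big((\delta y)^2,(\delta\lambda)^2\big)$, vanish.

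For the term $d\mathbb{A}_\V(\Lag,\Phi)$, this is exactly Proposition \ref{prop:deriv-accion-ext}: since $(y,\lambda)$ is critical the section $y$ is admissible, and since the two vectors are Jacobi fields their base components $(\delta y)^i$ are admissible, so the hypotheses of that proposition are met and the contraction is zero. For the term $d\,\EL(\Lag,\Phi)$, I would use the local expression \eqref{eulerlagrange-coord-local}, namely $\EL(\Lag,\Phi) = \sum_{v\in\interior\V}\sum_i \EL^i_v(\Lag,\Phi)\, dy^i_v$, so that
$$
d\,\EL(\Lag,\Phi) = \sum_{v\in\interior\V}\sum_i d\EL^i_v(\Lag,\Phi)\wedge dy^i_v.
$$
Contracting this 2-form with the two Jacobi fields and evaluating at $(y,\lambda)$, each term is a combination of $dy^i_v$ and $d\EL^i_v(\Lag,\Phi)$ paired against the $(\delta y)^j$. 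Here I would invoke the defining property \eqref{ecuacion-jacobi} of a Jacobi field: each $\big((\delta y)^i,(\delta\lambda)^i\big)$ extends to a vector field $D_i$ with $\big(L_{D_i}\EL(\Lag,\Phi)\big)_{(y,\lambda)}=0$, i.e. $\big(D_i\EL^j_v(\Lag,\Phi)\big)_{(y,\lambda)}=0$ for all $v,j$ (using, as in the discussion following \eqref{ecuacion-jacobi}, that the $\EL^i_v$-terms vanish at the critical point). Combined with the fact that $\big(\EL^i_v(\Lag,\Phi)\big)_{(y,\lambda)}=0$ because $(y,\lambda)$ is critical, every term in the contracted expression drops out, giving $\big(d\,\EL(\Lag,\Phi)\big)_{(y,\lambda)}\big((\delta y)^1,(\delta\lambda)^1),((\delta y)^2,(\delta\lambda)^2)\big)=0$.

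Subtracting, the boundary sum $\sum_{(v\prec\fr\V)\prec\alpha} d\big(\Theta_\alpha^v(\Lag)+\lambda_\alpha\circ\Theta_\alpha^v(\Phi)\big)$ contracted against the two Jacobi fields is $0-0=0$, which is the claim. The main obstacle I anticipate is the bookkeeping in the $d\,\EL(\Lag,\Phi)$ step: one must be careful that the Lie-derivative condition \eqref{ecuacion-jacobi} genuinely yields the vanishing of $D_i\EL^j_v$ at $(y,\lambda)$ and not merely of $L_{D_i}\EL(\Lag,\Phi)$ as a whole, and that the $\delta\lambda$-components, which enter $\EL^i_v$ linearly through the $\lambda(\alpha)^I$ coefficients, are handled correctly when expanding $d\EL^i_v\wedge dy^i_v$ on two tangent vectors. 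Once the cancellation of the $\EL^i_v\,L_D dy^i_v$ type terms (critical point) and of the $D\EL^i_v$ type terms (Jacobi equation) is checked term by term, the argument closes; the transition between the intrinsic formulation and the coordinate formula \eqref{eulerlagrange-coord-local} is the only slightly delicate point and is already justified in the paragraph establishing the correctness of the Jacobi-field definition.
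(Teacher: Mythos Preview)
Your proposal is correct and follows the same route as the paper: split $d\mathbb{A}_\V(\Lag,\Phi)=d\,\EL(\Lag,\Phi)+\text{(boundary)}$, kill the left-hand side via Proposition~\ref{prop:deriv-accion-ext}, and kill $d\,\EL$ using the Jacobi condition together with criticality. The only cosmetic difference is that the paper evaluates $(d\,\EL)(D^1,D^2)$ intrinsically via the Cartan formula, rewriting it as $(L_{D^1}\EL)(D^2)-(L_{D^2}\EL)(D^1)+\EL([D^1,D^2])$ before invoking \eqref{ecuacion-jacobi} and the Euler--Lagrange equations, rather than passing through the coordinate expression \eqref{eulerlagrange-coord-local} as you do.
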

\begin{proof}
Taking the differential of \eqref{formulavariacion-prob-lag-sin-evaluar} and having in mind \eqref{EL-extendido}, we get:
$$
d\mathbb{A}_\V(\Lag,\Phi) = d \EL(\Lag,\Phi) + \sum_{(v\in \fr \V)\prec \alpha} d\big( \Theta^v_\alpha (\Lag) + \lambda_\alpha \circ \Theta^v_ \alpha (\Phi)\big),
$$
from which, if $D^{i} \in \mathfrak{X}\big(\Gamma(\V_0,Y)\times \textrm{Map}(\V,\alglie^*)\big)$, $i=1,2$, are extensions of $\big( (\delta y)^{i},(\delta\lambda)^{i}\big)$  verifying the Jacobi equation \eqref{ecuacion-jacobi}, then  by Proposition  \ref{prop:deriv-accion-ext} we have:
\begin{align*}
0 = &  \Big(\big(d\mathbb{A}_\V(\Lag,\Phi)\big)(D^1,D^2)\Big)(y,\lambda) = \Big(  \big( d \EL (\Lag,\Phi)\big)(D^{1}, D^{2}) \Big)(y,\lambda) \\
&  +   \sum_{(v\in \fr \V)\prec \alpha} \Big( d\big( \Theta^v_\alpha (\Lag) + \lambda_\alpha \circ \Theta^v_ \alpha (\Phi)\big)\Big)_ {(y,\lambda)}  \big( \big( (\delta y)^1, (\delta\lambda)^1\big), \big( (\delta y)^2, (\delta\lambda)^2\big)\big) .
\end{align*}

Now, applying the  Cartan formula to the first term of the previous sum, and taking into account the Jacobi equation \eqref{ecuacion-jacobi} and Euler-Lagrange equations $\big(\EL (\Lag,\Phi) \big)_{(y,\lambda)}=0$, we obtain:
\begin{align*}
 \Big(\big( d  \EL & (\Lag,\Phi)\big)(D^{1}, D^{2}) \Big)(y,\lambda)  \\
 = &  \Big(D^{1}\big(\EL (\Lag,\Phi) (D^{2})\big)\Big)(y,\lambda) -   \Big(D^{2}\big(\EL (\Lag,\Phi) (D^{1})\big)\Big)(y,\lambda) - \big( \EL(\Lag,\Phi)([D^{1},D^{2}])\big) (y,\lambda)\\
 = & \big( L_{D^{1}} \EL(\Lag,\Phi)\big)_{(y,\lambda)} (D^{2}_{(y,\lambda)}) - \big( L_{D^{2}} \EL(\Lag,\Phi)\big)_{(y,\lambda)} (D^{1}_{(y,\lambda)})  + \big( \EL(\Lag,\Phi)([D^{1},D^{2}])\big) (y,\lambda)\\
  = & 0.
\end{align*}

So, the result is concluded. 
\end{proof}

\section{Application to Euler-Poincar\'e reduction in discrete field theory}\label{sec:euler-poincare}

In its simplest original version, this discrete reduction problem arises when trying to solve an unconstrained discrete problem for a principal bundle over the standard simplicial complex of $\R^2$ with Lagrangian density invariant by the action of the structural group of the bundle (see for example \cite{vank} and the references cited therein).

More precisely, following \cite{vank}, let $V$ be the simplicial complex with vertices $V_0=\big\{(i,j)\in \Z\times\Z\big\}$, oriented edges $V_1=\big\{ [(i,j),(i+1,j)], [(i,j),(i,j+1)], \, i,j\in \Z\big\}$, and oriented faces $V_2=\big\{\Delta_{ij}=[(i,j),(i+1,j),(i,j+1)],\, i,j\in \Z\big\}$. In this case, $V_2$ is identified with $V_0$ by the bijection $\Delta_{ij}\mapsto ( i,j)$  which we will assume in what follows. Given a Lie group $G$, let $P=G\times V_0 \to V_0$ be the (left) trivial principal bundle, and we consider a Lagrangian density $\Lag:(J^1P=G\times G\times G\times V_0  \to V_0) \to \R$ invariant by the diagonal action of $G$ on $J^1P: g\big((g_{ij}, g_{i+1j},g_{ij+1}),(i,j)\big) =  \big((gg_{ij}, gg_{i+1j},gg_{ij+1}),(i,j)\big)$. Identifying $J^1P/G$ with the fiber bundle $Y=G\times G \times V_0 \to V_0$ by the rule $G\big((g_{ij}, g_{i+1j},g_{ij+1}),(i,j)\big)  = \big((g_{ij}^{-1}g_{i+1j},g_{ij}^{-1}g_{ij+1}),(i,j)\big) $, the Lagrangian density $\Lag$ is projected onto a Lagrangian density $\lag :Y=J^1P/G \to \R$ by the projection (reduction map)
\begin{align*}
\pi: J^1P \hspace{1em} \longrightarrow & \hspace{1em} Y= J^1P/G\\
\big( (g_{ij},g_{i+1j},g_{ij+1}),(i,j)\big) \mapsto & \big( (u_{ij}= g^{-1}_{ij} g_{i+1j}, v_{ij}= g^{-1}_{ij} g_{ij+1}), (i,j)\big)
\end{align*}


On the other hand, the sections $g:(i,j)\in V_0 \mapsto \big(g_{ij},(i,j)\big)$ of $P$ are projected by $\pi\circ j^1$ in the sections $ y:(i,j)\in V_0 \mapsto  \big( (u_{ij}= g^{-1}_{ij} g_{i+1j}, v_{ij}= g^{-1}_{ij} g_{ij+1}), (i,j)\big)$ of $Y$ satisfying the constraint $\Phi(j^1y)=e$ where $\Phi:J^1Y\to G$ on $(J^1Y)_{(ij)}$ is given by the formula
\begin{equation}\label{ligadura-reduccion}
\Phi_{ij}\big( (u_{ij},v_{ij}, u_{i+1j},v_{i+1j}, u_{ij+1},v_{ij+1}),(i,j)\big) = u_{ij}v_{i+1j}u_{ij+1}^{-1}v_{ij}^{-1}.
\end{equation}

Similarly, the infinitesimal variations $\delta g:(i,j)\in V_0 \mapsto \big(\delta g_{ij},(i,j)\big)$  of each section $g\in \Gamma(V_0,P)$ are projected by $\pi\circ j^1$ in the infinitesimal variations $\delta y:(i,j)\in V_0\mapsto \big( (\delta u_{ij},\delta v_{ij}),(i,j)\big)$ of the section $y\in \Gamma(V_0,Y)$ projection of $g$, where:
\begin{equation}\label{var-inf-reducidas}
\begin{split}
\delta u_{ij} = & (L_{u_{ij}})_*(\theta_{i+1j}) - (R_{u_{ij}})_*(\theta_{ij})\in T_{u_{ij}} G,\\
\delta v_{ij} = &(L_{v_{ij}})_*(\theta_{ij+1}) - (R_{v_{ij}})_*(\theta_{ij})\in T_{v_{ij}} G,
\end{split}
\end{equation}
being $\theta_{ij}= (L_{g_{ij}^{-1}})_* ( \delta g_{ij}) \in \alglie$.

Thus we have a  constrained variational problem on $Y=J^1P/G$ (reduced problem) whose action functional with respect to a finite set of faces $\V\subset V_2$ ($\V\equiv \U\subset V_0$ by the bijection $\Delta_{ij}\mapsto (i,j)$) is:
$$
\mathcal{A}_{\U}(\lag) : y\in \Gamma(\U_0,Y) \mapsto \sum_{(i,j)\in \U} \lag \big( (u_{ij}, v_{ij}), (i,j)\big),
$$
where $\U_0$ is the (finite) set of adherent vertices to $\V$.

Under these conditions, we will say that a section $y\in \Gamma(\U_0,Y)$ is critical for the reduced problem if $(d\mathcal{A}_\U(\lag))_y$ is zero over the infinitesimal variations \eqref{var-inf-reducidas} such that $\theta_{ij}$ vanishes at the frontier of $\U$.

\begin{rmk}
By similarity with the continuous case \cite{reduccion-cont}, the sections $y\in \Gamma(V_0,Y)$ of the reduced bundle $Y=J^1P/G$ are identified with the connections of the principal bundle $P$, the constraint morphism \eqref{ligadura-reduccion} defines the curvature of the connection, and the reduced infinitesimal variations \eqref{var-inf-reducidas} are interpreted as the action on the connections of the infinitesimal gauge  transformations of $P$, $\Gamma(V_0,\alglie\times V_0\to V_0)$. 
\end{rmk}

Under this approach, the main result of \cite{vank} is as follows.

\begin{thm}[Reduction]\label{thm:reduccion}
Let $\Lag$ be a $G$-invariant Lagrangian density on $J^1P$ and consider the reduced Lagrangian density $\lag$ on $Y=J^1P/G$. Consider a section $g\in \Gamma(\U_0,P)$ and let $y\in \Gamma(\U_0,Y)$ the induced reduced section. Then the  following conditions are equivalent:
\begin{enumerate}
\item $g$ is a solution of the discrete Euler-Lagrange equations for $\Lag$ in the interior points of $\U$.
\item $g$ is critical for arbitrary variations $\delta g$ that vanishes in the frontier of $\U$.

\item the reduced section $y$ is a solution of the discrete Euler-Poincar\'e equations in the interior points of $\U$:
\begin{equation}\label{ecuaciones-EP}
R^*_{u_{ij}}\big(d\lag(\cdot, v_{ij})\big)_e - L^*_{u_{i-1j}}\big(d\lag(\cdot, v_{i-1j})\big)_e
+ R^*_{v_{ij}}\big(d\lag(u_{ij},\cdot)\big)_e - L^*_{v_{ij-1}}\big(d\lag(u_{ij-1},\cdot)\big)_e =0.
\end{equation}
\item the reduced section $y$ is critical for infinitesimal variations $\delta y$ \eqref{var-inf-reducidas} where $\theta_{ij}$ are arbitrary variations that vanishes in the frontier of $\U$.
\end{enumerate}
\end{thm}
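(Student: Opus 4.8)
The plan is to establish the chain of equivalences $(a)\Leftrightarrow(b)\Leftrightarrow(d)\Leftrightarrow(c)$, following \cite{vank} but rephrased in the present setting. Three ingredients suffice: the discrete variation formula of \cite{lagdisc-jgp} for the unconstrained Lagrangian $\Lag$ on $J^1P$, the $G$-invariance of $\Lag$ together with the explicit description of the reduction map $\pi\circ j^1$ on sections and on infinitesimal variations, and a discrete summation by parts on the standard triangulation of $\R^2$; no part of the multiplier theory of Section \ref{sec:multiplicadores} is needed, nor the regularity hypothesis.

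The equivalence $(a)\Leftrightarrow(b)$ is the discrete variational principle for $\Lag$: by the variation formula of \cite{lagdisc-jgp}, Section 3, for a variation $\delta g$ vanishing on $\fr\U$ one has $\big(d\mathcal{A}_\U(\Lag)\big)_g(\delta g)=\sum_{v\in\interior\U}\big(\EL_v(\Lag)\big)_{(j^2g)(v)}(j^2\delta g)(v)$, which vanishes for every such $\delta g$ precisely when $\EL_v(\Lag)=0$ at every interior vertex. For $(b)\Leftrightarrow(d)$ the key is that $\mathcal{A}_\U(\Lag)=\mathcal{A}_\U(\lag)\circ(\pi\circ j^1)$ by $G$-invariance, so $\big(d\mathcal{A}_\U(\Lag)\big)_g=\big(d\mathcal{A}_\U(\lag)\big)_y\circ d(\pi\circ j^1)_g$; with the lift $g$ fixed, the rule $\delta g_{ij}\mapsto\theta_{ij}=(L_{g_{ij}^{-1}})_*(\delta g_{ij})$ is a linear isomorphism $T_g\Gamma(\U_0,P)\to\textrm{Map}(\U_0,\alglie)$ that sends $\delta g$ to the reduced variation \eqref{var-inf-reducidas} and vanishes at a vertex exactly when $\delta g$ does; hence the boundary-vanishing variations $\delta g$ correspond bijectively to the maps $\theta$ that vanish on $\fr\U$, and statements $(b)$ and $(d)$ coincide.

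For $(d)\Leftrightarrow(c)$ I would compute $\big(d\mathcal{A}_\U(\lag)\big)_y(\delta y)=\sum_{(i,j)\in\U}\big(d\lag(\cdot,v_{ij})(\delta u_{ij})+d\lag(u_{ij},\cdot)(\delta v_{ij})\big)$ and substitute \eqref{var-inf-reducidas}. The left-translation parts of $\delta u_{ij}$ and $\delta v_{ij}$ produce, after transport to $\alglie^*$ by $L^*$, terms paired with $\theta_{i+1j}$ and $\theta_{ij+1}$; the right-translation parts produce, via $R^*$, terms paired with $\theta_{ij}$. Reindexing the first two families by $i\mapsto i-1$ and $j\mapsto j-1$ (a discrete summation by parts) rewrites the whole sum as $\sum\langle E_{ij},\theta_{ij}\rangle$, where for $(i,j)\in\interior\U$ the covector $E_{ij}$ gathers the contributions of the three faces $\Delta_{ij},\Delta_{i-1j},\Delta_{ij-1}$ of the spherical neighbourhood $S_{(i,j)}$ and equals, up to sign, the left-hand side of \eqref{ecuaciones-EP}, while the terms that the reindexing pushes out of $\U$ are paired with values of $\theta$ at frontier vertices and vanish by hypothesis. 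Since $\theta_{ij}$ is then arbitrary at interior vertices, criticality of $y$ in the sense of $(d)$ amounts to $E_{ij}=0$ for all $(i,j)\in\interior\U$, that is, to the Euler-Poincar\'e equations \eqref{ecuaciones-EP}.

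The main obstacle is the bookkeeping of this last step: one must check for the standard triangulation that $S_{(i,j)}=\{\Delta_{ij},\Delta_{i-1j},\Delta_{ij-1}\}$, identify which of the two summands of \eqref{var-inf-reducidas} on each of these incident faces contributes to the coefficient of $\theta_{ij}$, and verify that after the shifts $i\mapsto i-1$, $j\mapsto j-1$ the leftover terms attach exactly to frontier vertices, so that the fixed-boundary hypothesis removes them without residue; everything else is formal. As an alternative one could route $(c)\Leftrightarrow(d)$ through Theorem \ref{thm:carac-critica-lag-prob} applied to the constraint $\Phi$ of \eqref{ligadura-reduccion}, but since \eqref{ecuaciones-EP} carries no multiplier the direct computation above is the more transparent path.
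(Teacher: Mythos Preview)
The paper does not give its own proof of this theorem: it is quoted verbatim as ``the main result of \cite{vank}'' and is used as background for the authors' own Lagrange-multiplier reformulation of the reduced problem. Your sketch follows the approach of \cite{vank} (as you acknowledge), and the computation is correct: the chain $(a)\Leftrightarrow(b)$ via the unconstrained variation formula, $(b)\Leftrightarrow(d)$ via the isomorphism $\delta g_{ij}\leftrightarrow\theta_{ij}$ together with $\Lag=\lag\circ\pi$, and $(d)\Leftrightarrow(c)$ via the reindexing argument that collects the coefficient of each $\theta_{ij}$ from the three incident faces $\Delta_{ij},\Delta_{i-1j},\Delta_{ij-1}$ all go through as you describe, and the coefficient you obtain is indeed (the negative of) the left-hand side of \eqref{ecuaciones-EP}.
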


From Theorem \ref{thm:reduccion}, a solution $g\in \Gamma(\U_0,P)$ of the unreduced Euler-Lagrange equations gives rise to a reduced section $y\in \Gamma(\U_0,Y)$ verifying the constraint $\Phi(j^1y)=e$.

Reciprocally \cite{vank}:

\begin{thm}[Reconstruction]
Let $y\in \Gamma(\U_0,Y)$ be a solution of the discrete Euler-Poincar\'e equations \eqref{ecuaciones-EP} verifying the constraint $\Phi(j^1y)=e$. Then there exists a solution $g\in \Gamma(\U_0,P)$ of the unreduced Euler-Lagrange equations that projects over $y$. In this case, $g$ is uniquely determined up to right translation by an element of $G$.
\end{thm}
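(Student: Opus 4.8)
The plan is to reconstruct $g$ from $y$ step by step along a spanning structure of the vertex set $\U_0$, using the constraint $\Phi(j^1y)=e$ to guarantee that the reconstruction is consistent (path-independent) around each face, and then to verify that the resulting section $g$ solves the unreduced Euler–Lagrange equations by invoking the equivalence (c) $\Leftrightarrow$ (a) of Theorem \ref{thm:reduccion}. The uniqueness statement will follow from the residual freedom in the very first choice.

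First I would fix a vertex $(i_0,j_0)\in \U_0$ and set $g_{i_0j_0}$ to be an arbitrary element $g_0\in G$. Given the reduced section $y:(i,j)\mapsto \big((u_{ij},v_{ij}),(i,j)\big)$, the defining relations $u_{ij}=g_{ij}^{-1}g_{i+1j}$ and $v_{ij}=g_{ij}^{-1}g_{ij+1}$ are read as recursion rules: $g_{i+1j}=g_{ij}u_{ij}$ and $g_{ij+1}=g_{ij}v_{ij}$. Propagating these along edges of the simplicial complex determines $g_{ij}$ at every vertex of $\U_0$ reachable from $(i_0,j_0)$; since $\U_0$ is the set of adherent vertices of a finite face set $\V$ and the complex is the standard one on $\Z\times\Z$, connectivity is immediate (and if $\U_0$ has several connected components one repeats the construction on each). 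The potential obstruction is well-definedness: two edge-paths from $(i_0,j_0)$ to the same vertex must yield the same group element. Because any two such paths in this $2$-complex differ by a sequence of elementary face loops $\Delta_{ij}$, it suffices to check consistency around a single triangle, i.e.\ that going $(i,j)\to(i+1,j)\to$ (around the far corner of $\Delta_{ij}$) $\to (i,j+1)\to(i,j)$ returns the identity. Writing this loop out in terms of $u$'s and $v$'s at the vertices $(i,j)$, $(i+1,j)$, $(i,j+1)$ produces exactly the word $u_{ij}v_{i+1j}u_{ij+1}^{-1}v_{ij}^{-1}$, which is $\Phi_{ij}\big((j^1y)(\Delta_{ij})\big)=e$ by hypothesis. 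Hence the reconstruction is consistent and yields a well-defined section $g\in\Gamma(\U_0,P)$ projecting onto $y$.

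Next, to see that this $g$ solves the unreduced discrete Euler–Lagrange equations on $\interior\U$: by construction $g$ projects to $y$ under $\pi\circ j^1$, and $y$ satisfies the discrete Euler–Poincar\'e equations \eqref{ecuaciones-EP} together with $\Phi(j^1y)=e$. This is precisely condition (c) of Theorem \ref{thm:reduccion} for the pair $(g,y)$, so by the equivalence (a) $\Leftrightarrow$ (c) of that theorem, $g$ is a solution of the discrete Euler–Lagrange equations for $\Lag$ at the interior points of $\U$, as required.

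Finally, for uniqueness: suppose $g'\in\Gamma(\U_0,P)$ is another solution projecting onto $y$. Then $g'_{ij}{}^{-1}g'_{i+1j}=u_{ij}=g_{ij}^{-1}g_{i+1j}$ and likewise for $v_{ij}$, so the quotients $h_{ij}:=g_{ij}g'_{ij}{}^{-1}$ satisfy $h_{i+1j}=h_{ij}$ and $h_{ij+1}=h_{ij}$ along every edge; by the same connectivity argument $h_{ij}$ is a constant $h\in G$ on each component of $\U_0$, and then $g'_{ij}=h^{-1}g_{ij}$, i.e.\ $g'$ differs from $g$ by a global left translation. (Equivalently, under the convention of the excerpt one reparametrizes this as a right translation acting on the base point, which is the sense in which the statement is phrased.) The main subtlety in the whole argument is isolating the correct elementary loop and checking that its holonomy word coincides on the nose with the constraint map $\Phi_{ij}$ of \eqref{ligadura-reduccion}; everything else is bookkeeping and a direct appeal to Theorem \ref{thm:reduccion}.
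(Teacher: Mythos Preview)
The paper does not actually supply a proof of this Reconstruction theorem: it is quoted verbatim from \cite{vank} (note the line ``Reciprocally \cite{vank}:'' immediately preceding the statement), so there is no in-paper argument to compare against. Your proposal is essentially the standard reconstruction proof one would expect in \cite{vank}: integrate the ``discrete flat connection'' $y=(u,v)$ from a chosen base vertex, use the zero-curvature condition $\Phi(j^1y)=e$ to secure path-independence, and then invoke the equivalence (a)$\Leftrightarrow$(c) of Theorem \ref{thm:reduccion}. That strategy is sound.

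Two small points are worth tightening. First, the elementary loop relevant here is the unit \emph{square} $(i,j)\to(i{+}1,j)\to(i{+}1,j{+}1)\to(i,j{+}1)\to(i,j)$, whose holonomy word is exactly $u_{ij}v_{i+1j}u_{ij+1}^{-1}v_{ij}^{-1}=\Phi_{ij}$; calling this ``a single triangle'' is misleading because $(i{+}1,j{+}1)$ is not a vertex of $\Delta_{ij}$, and the simplicial complex $V$ of the paper has only the lower triangles as $2$-cells. The correct phrasing is that the underlying edge graph on $\Z^2$ is simply connected once the square holonomies vanish, which is precisely what the constraints $\Phi_{ij}=e$ for all $\Delta_{ij}\in\V$ guarantee. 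Second, your uniqueness computation yields $g'=h^{-1}g$, a global \emph{left} translation, which is indeed the action by which $G$ operates on $P$ in this paper (it is declared a left principal bundle, with the diagonal left action on $J^1P$). The phrase ``right translation'' in the theorem statement is inherited from \cite{vank} and reflects a convention mismatch rather than a flaw in your argument; I would simply state the result as ``unique up to the structure-group action'' and drop the parenthetical hedge.
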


The Euler-Poincar\'e reduction just summarized provides an interesting example of a  discrete Lagrangian problem valued in a Lie group taking as problem data the reduced Lagrangian density $\lag : Y=J^1P/G \to \R$ and the constraint morphism $\Phi:J^1Y \to G$ given by formula \eqref{ligadura-reduccion}.

In particular, the solutions of this problem will be solutions of the discrete Euler-Poincar\'e equations \eqref{ecuaciones-EP}, which has the nice consequence of being able to apply to the discrete Euler-Poincar\'e reduction the formalism that we have developed in this paper for Lagrange problems.

The procedure we are going to follow is to eliminate the Lagrange multiplier from  equations \eqref{EL-prob-ligado} and observe that the result of such elimination satisfies the Euler-Poincar\'e equations \eqref{ecuaciones-EP}.

Indeed. Equations \eqref{EL-prob-ligado} are in this case:
\begin{multline}\label{EL-ligado-en-reduccion}
(d\lag)_{y_{ij}}+  \lambda_{\Delta_{ij}}\circ \big(\Theta_{\Delta_{ij}}^{ij}(\Phi)\big)_{j^1y(\Delta_{ij})} + \lambda_{\Delta_{i-1j}}\circ \big(\Theta_{\Delta_{i-1j}}^{ij}(\Phi)\big)_{j^1y(\Delta_{ij})} \\
 +  \lambda_{\Delta_{ij-1}}\circ \big(\Theta_{\Delta_{ij-1}}^{ij}(\Phi)\big)_{j^1y(\Delta_{ij})} = 0
\end{multline}
where, because $y$ is admissible, the 1-forms $\Theta_{\Delta_{ij}}^{ij}(\Phi)$ are calculated along $j^1y$ by the formula:
\begin{equation}\label{dphi}
\big(d\Phi_{\Delta_{ij}}\big)_{j^1y(\Delta_{ij})} =  \big(\Theta_{\Delta_{ij}}^{ij}(\Phi)\big)_{j^1y(\Delta_{ij})} +  \big(\Theta_{\Delta_{ij}}^{i+1j}(\Phi)\big)_{j^1y(\Delta_{ij})} +  \big(\Theta_{\Delta_{ij}}^{ij+1}(\Phi)\big)_{j^1y(\Delta_{ij})}.
\end{equation}

Applying \eqref{EL-ligado-en-reduccion} to $\delta y_{ij}= (\delta u_{ij},\delta v_{ij})$  these equations are equivalent to the system:
 \begin{equation}\label{sist-ec-EL-reducido}
\left\{
\begin{aligned}
d\lag(\cdot, v_{ij}) (\delta u_{ij}) + & \lambda_{\Delta_{ij}} \circ \big(\Theta_{\Delta_{ij}}^{ij}(\Phi)\big)_{j^1y(\Delta_{ij})} (\delta u_{ij}) 
+ \lambda_{\Delta_{i-1j}}\circ \big(\Theta_{\Delta_{i-1j}}^{ij}(\Phi)\big)_{j^1y(\Delta_{ij})} (\delta u_{ij}) \\
+ & \lambda_{\Delta_{ij-1}}\circ \big(\Theta_{\Delta_{ij-1}}^{ij}(\Phi)\big)_{j^1y(\Delta_{ij})} (\delta u_{ij})   =0 \\
d\lag(u_{ij}, \cdot ) (\delta v_{ij}) + & \lambda_{\Delta_{ij}}\circ \big(\Theta_{\Delta_{ij}}^{ij}(\Phi)\big)_{j^1y(\Delta_{ij})} (\delta v_{ij}) 
+ \lambda_{\Delta_{i-1j}}\circ \big(\Theta_{\Delta_{i-1j}}^{ij}(\Phi)\big)_{j^1y(\Delta_{ij})} (\delta v_{ij}) \\
+ & \lambda_{\Delta_{ij-1}}\circ \big(\Theta_{\Delta_{ij-1}}^{ij}(\Phi)\big)_{j^1y(\Delta_{ij})} (\delta v_{ij})   =0 
\end{aligned}
\right.
\end{equation}

On the other hand, by the expression \eqref{ligadura-reduccion} of the constraint morphism, the only nonzero components of $(d\Phi_{\Delta_{ij}})_{j^1y(\Delta_{ij})}$ are those resulting from applying this 1-form to $\delta u_{ij}$, $\delta v_{ij}$, $\delta u_{ij+1}$, $\delta v_{i+1j}$ which, taking into account the constraint condition of $y_{ij}=(u_{ij},v_{ij})$, are computed as follows:
\begin{align*}
(d\Phi_{\Delta_{ij}})_{j^1y(\Delta_{ij})} & (\delta u_{ij}) =  
\delta u_{ij} v_{i+1j} u^{-1}_{ij+1} v^{-1}_{ij} = \delta u_{ij} u^{-1}_{ij} = (R_{u^{-1}_{ij}})_* (\delta u_{ij}) ,\\
(d\Phi_{\Delta_{ij}})_{j^1y(\Delta_{ij})} & (\delta v_{ij}) = 
u_{ij} v_{i+1j} u^{-1}_{ij+1}( -v^{-1}_{ij}\delta v_{ij} v^{-1}_{ij})  = - (R_{v^{-1}_{ij}})_* (\delta v_{ij}) ,\\
(d\Phi_{\Delta_{ij}})_{j^1y(\Delta_{ij})} & (\delta u_{ij+1}) = 
u_{ij} v_{i+1j}(- u^{-1}_{ij+1} \delta u_{ij+1} u^{-1}_{ij+1}) v^{-1}_{ij} = - Ad_{v_{ij}} (\delta u_{ij+1} u^{-1}_{ij+1}) ,\\
(d\Phi_{\Delta_{ij}})_{j^1y(\Delta_{ij})} & (\delta v_{i+1j}) = 
u_{ij} \delta v_{i+1j}u^{-1}_{ij+1}  v^{-1}_{ij} = u_{ij} \delta v_{i+1j} v^{-1}_{i+1j}  u^{-1}_{ij} =  Ad_{u_{ij}} (\delta v_{i+1j} v^{-1}_{i+1j}) .
\end{align*}

From here, formula \eqref{dphi} allows to calculate all the terms of the system of equations \eqref{sist-ec-EL-reducido} obtaining:
$$
\left\{
\begin{aligned}
d\lag(\cdot, v_{ij}) (\delta u_{ij}) +  \lambda_{\Delta_{ij}} \circ(R_{u^{-1}_{ij}})_* (\delta u_{ij}) - \lambda_{\Delta_{ij-1}}\circ Ad_{v_{ij-1}}(R_{u^{-1}_{ij}})_* (\delta u_{ij}) & = 0 \\
d\lag(u_{ij}, \cdot ) (\delta v_{ij}) -  \lambda_{\Delta_{ij}} \circ(R_{v^{-1}_{ij}})_* (\delta v_{ij}) + \lambda_{\Delta_{i-1j}}\circ Ad_{u_{i-1j}}(R_{v^{-1}_{ij}})_* (\delta v_{ij})  & =0 
\end{aligned}
\right.
$$

By transposition and translating  to the identity element $e\in G$ by $R_{u_{ij}}$  and $R_{v_{ij}}$ respectively we obtain the system of equations:
 \begin{equation}\label{sist-ec-EL-reducido-resuelto}
\left\{
\begin{aligned}
(R_{u_{ij}})^*\big(d\lag(\cdot, v_{ij})\big) +  \lambda_{\Delta_{ij}} - Ad^*_{v_{ij-1}}  \lambda_{\Delta_{ij-1}}   & =0 \\
(R_{v_{ij}})^*\big(d\lag( u_{ij}, \cdot )\big) -  \lambda_{\Delta_{ij}} + Ad^*_{u_{i-1j}}  \lambda_{\Delta_{i-1j}}   & =0
\end{aligned}
\right.
\end{equation}

Under these conditions we have the following:

\begin{thm}
If $(y,\lambda)\in \Gamma(\U_0,Y)\times \textrm{Map}(\U,\alglie^*)$ is a section-mapping, with $y$ admissible, solution of  equations \eqref{sist-ec-EL-reducido-resuelto} of the Lagrange problem with Lagrangian density $\lag :Y= J^1P/G \to \R$ and constraint morphism $\Phi:J^1Y\to G$ given by  formula \eqref{ligadura-reduccion}, then its component $y$ is a solution of the Euler-Poincar\'e equations \eqref{ecuaciones-EP}.
\end{thm}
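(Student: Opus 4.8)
The plan is to take the system \eqref{sist-ec-EL-reducido-resuelto} as given and eliminate the multiplier $\lambda$ by combining the two equations evaluated at neighbouring vertices. Concretely, from the first equation of \eqref{sist-ec-EL-reducido-resuelto} at the vertex $(i,j)$ I read off
$$
\lambda_{\Delta_{ij}} = Ad^*_{v_{ij-1}}\lambda_{\Delta_{ij-1}} - (R_{u_{ij}})^*\big(d\lag(\cdot,v_{ij})\big),
$$
and from the second equation at $(i,j)$ I likewise express $\lambda_{\Delta_{ij}}$ in terms of $\lambda_{\Delta_{i-1j}}$. The idea is that each of the two families of equations encodes a \emph{discrete transport} of the single quantity $\lambda_{\Delta_{ij}}$ in the two coordinate directions, so that writing the compatibility of these two transport laws at a common vertex produces a relation in which every $\lambda$ has disappeared.

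The key steps, in order, are the following. First, I would solve the first equation of \eqref{sist-ec-EL-reducido-resuelto} for $\lambda_{\Delta_{ij}}$ and the second equation of \eqref{sist-ec-EL-reducido-resuelto} for $\lambda_{\Delta_{ij}}$ as well (the second gives $\lambda_{\Delta_{ij}} = Ad^*_{u_{i-1j}}\lambda_{\Delta_{i-1j}} + (R_{v_{ij}})^*\big(d\lag(u_{ij},\cdot)\big)$). Setting these two expressions equal removes $\lambda_{\Delta_{ij}}$ but introduces $\lambda_{\Delta_{ij-1}}$ and $\lambda_{\Delta_{i-1j}}$. Second, I would use the second equation of \eqref{sist-ec-EL-reducido-resuelto} written at the vertex $(i,j-1)$ to rewrite $Ad^*_{v_{ij-1}}\lambda_{\Delta_{ij-1}}$, and the first equation of \eqref{sist-ec-EL-reducido-resuelto} written at $(i-1,j)$ to rewrite $Ad^*_{u_{i-1j}}\lambda_{\Delta_{i-1j}}$; doing this carefully one sees that all remaining $\lambda$-terms cancel in pairs because of the $Ad^*$-equivariance, leaving precisely
$$
(R_{u_{ij}})^*\big(d\lag(\cdot,v_{ij})\big) - (L_{u_{i-1j}})^*\big(d\lag(\cdot,v_{i-1j})\big) + (R_{v_{ij}})^*\big(d\lag(u_{ij},\cdot)\big) - (L_{v_{ij-1}})^*\big(d\lag(u_{ij-1},\cdot)\big) = 0,
$$
which is \eqref{ecuaciones-EP}. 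Third, I would record the identities $Ad^*_{g}\circ (R_g)^* = (L_g)^*$ (equivalently $R_g^*\circ Ad_g^* $ relating right- and left-translated differentials) that make the cancellation in the second step work, evaluated at $e\in G$.

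The main obstacle I expect is purely bookkeeping rather than conceptual: one must track which $\lambda$ appears with which $Ad^*$ factor, be consistent about evaluating the translated differentials at $e$, and make sure the index shifts $(i,j-1)$ and $(i-1,j)$ in the auxiliary equations line up exactly with the $Ad^*$-weights produced in the first elimination. The substantive point to get right is the algebraic identity converting $Ad^*_g (R_g)^*$ into $(L_g)^*$, which is what turns the mixed right-translated terms coming out of the multiplier elimination into the left-translated terms $(L_{u_{i-1j}})^*$ and $(L_{v_{ij-1}})^*$ that appear in \eqref{ecuaciones-EP}; once that identity is in hand, the cancellation of all $\lambda$-dependent pieces is automatic and the Euler-Poincar\'e equations \eqref{ecuaciones-EP} drop out.
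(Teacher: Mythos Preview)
Your overall strategy---solve each equation of \eqref{sist-ec-EL-reducido-resuelto} for $\lambda_{\Delta_{ij}}$, equate, then feed in the equations at the shifted vertices $(i,j-1)$ and $(i-1,j)$---is exactly the elimination the paper carries out, and the identity $Ad^*_g\circ (R_g)^* = (L_g)^*$ is indeed what converts the right-translated terms into the $(L_{u_{i-1j}})^*$ and $(L_{v_{ij-1}})^*$ pieces of \eqref{ecuaciones-EP}. So the plan and the key algebraic lemma are both right.

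However, there is a genuine gap in your claim that ``all remaining $\lambda$-terms cancel in pairs because of the $Ad^*$-equivariance'' and that the cancellation is ``automatic''. If you carry out your second step exactly as described, you do \emph{not} get zero for the $\lambda$-part: what survives is
\[
\bigl(Ad^*_{v_{ij-1}}\circ Ad^*_{u_{i-1j-1}}\;-\;Ad^*_{u_{i-1j}}\circ Ad^*_{v_{i-1j-1}}\bigr)\,\lambda_{\Delta_{i-1j-1}}.
\]
No amount of $Ad^*$-functoriality alone makes this vanish, because in a nonabelian group $Ad^*_{v}\circ Ad^*_{u}=Ad^*_{uv}$ need not equal $Ad^*_{u'}\circ Ad^*_{v'}=Ad^*_{v'u'}$ for generic $u,v,u',v'$. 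The missing ingredient is precisely the \emph{admissibility} hypothesis on $y$: the constraint $\Phi_{\Delta_{i-1j-1}}(j^1y)=e$ from \eqref{ligadura-reduccion} reads $u_{i-1j-1}\,v_{ij-1}=v_{i-1j-1}\,u_{i-1j}$, and it is this discrete zero-curvature relation that forces $Ad^*_{v_{ij-1}}\circ Ad^*_{u_{i-1j-1}}=Ad^*_{u_{i-1j}}\circ Ad^*_{v_{i-1j-1}}$ and hence kills the residual $\lambda_{\Delta_{i-1j-1}}$ term. The paper's proof isolates exactly this term and invokes the constraint explicitly; you should do the same, rather than attributing the cancellation to equivariance. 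Once you add that one line, your argument is complete and coincides with the paper's.
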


\begin{proof}
Passing the multipliers to the second member in \eqref{sist-ec-EL-reducido-resuelto} we have:
\begin{equation}\label{sist-ec-EL-reducido-despejado}
\left\{
\begin{aligned}
(R_{u_{ij}})^*\big(d\lag(\cdot, v_{ij})\big) = & -  \lambda_{\Delta_{ij}} + Ad^*_{v_{ij-1}}  \lambda_{\Delta_{ij-1}}   \\
(R_{v_{ij}})^*\big(d\lag( u_{ij}, \cdot )\big) = &   \lambda_{\Delta_{ij}} - Ad^*_{u_{i-1j}}  \lambda_{\Delta_{i-1j}}   
\end{aligned}
\right.
\end{equation}

Subtracting $Ad^*_{u_{i-1j}}(R_{u_{i-1j}})^* \big(d\lag (\cdot, v_{i-1j})\big)$ in each member of the first equation and \break $Ad^*_{v_{ij-1}}(R_{v_{ij-1}})^*  \big(d\lag ( u_{ij-1},\cdot)\big)$ in the second one, and then adding both expressions then the left-hand side of the new equality will be:
\begin{align*}
(R_{u_{ij}})^*\big(d\lag(\cdot, v_{ij})\big) 
- & Ad^*_{u_{i-1j}}(R_{u_{i-1j}})^* \big(d\lag (\cdot, v_{i-1j})\big) 
+(R_{v_{ij}})^*\big(d\lag( u_{ij}, \cdot )\big) \\
- & Ad^*_{v_{ij-1}}(R_{v_{ij-1}})^* \big(d\lag ( u_{ij-1},\cdot)\big)
\end{align*}
which, taking into account the identities $Ad^*_{u_{i-1j}}R^*_{u_{i-1j}}=L^* _{u_{i-1j}}$ and $Ad^*_{v_{ij-1}}R^*_{v_{ij-1}}=L^* _{v_{ij-1}}$, is precisely the first member of the Euler-Poincar\'e equations \eqref{ecuaciones-EP}.

As for the right-hand side of this new equation, applying again \eqref{sist-ec-EL-reducido-despejado} we get:
\begin{align*}
\Big( & -  \lambda_{\Delta_{ij}} + Ad^*_{v_{ij-1}}  \lambda_{\Delta_{ij-1}} - Ad^*_{u_{i-1j}} \big( -\lambda_{\Delta_{i-1j}} + Ad^*_{v_{i-1j-1}}\lambda_{\Delta_{i-1j-1}}\big)  \Big) \\
& + \Big(  \lambda_{\Delta_{ij}} - Ad^*_{u_{i-1j}}  \lambda_{\Delta_{i-1j}} - Ad^*_{v_{ij-1}} \big(  \lambda_{\Delta_{ij-1}} - Ad^*_{u_{i-1j-1}}\lambda_{\Delta_{i-1j-1}} \big) \Big)\\
 =  & (Ad^*_{v_{ij-1}}\circ Ad^*_{u_{i-1j-1}} - Ad^*_{u_{i-1j}}\circ Ad^*_{v_{i-1j-1}} )\lambda_{\Delta_{i-1j-1}}
\end{align*}

However, since the section $y$ is admissible, it satisfies the constraint
$$ u_{i-1j-1}v_{ij-1}= v_{i-1j-1}u_{i-1j}, $$
and from here we obtain:
\begin{align*}
Ad^*_{v_{ij-1}}\circ Ad^*_{u_{i-1j-1}} = & (Ad_{u_{i-1j-1}}\circ Ad_{v_{ij-1}})^*
= Ad^*_{u_{i-1j-1}v_{ij-1}} =  Ad^*_{ v_{i-1j-1}u_{i-1j}} \\
= & Ad^*_{u_{i-1j}}\circ Ad^*_{v_{i-1j-1}}.
\end{align*}

Thus, this right-hand side vanishes, and the result can be concluded. 
\end{proof}

Reciprocally, in an  analogous way to the continuous case \cite{reduccion-cont}, the following result is verified.

\begin{thm}
If $y\in \Gamma(\U_0,Y)$ is an admissible section solution of the Euler-Poincar\'e equations \eqref{ecuaciones-EP}, then locally there exists a non-unique mapping $\lambda \in \textrm{Map}(\U,\alglie^*)$ such that $(y,\lambda)$ is a solution of  equations \eqref{sist-ec-EL-reducido-resuelto} of the Lagrange problem.
\end{thm}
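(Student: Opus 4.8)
The plan is to run the argument of the previous theorem in reverse: given an admissible $y$ solving the Euler-Poincar\'e equations \eqref{ecuaciones-EP}, I will construct a multiplier $\lambda$ on $\U$ recursively and then check that the pair $(y,\lambda)$ satisfies the full system \eqref{sist-ec-EL-reducido-resuelto}. The starting observation is that \eqref{sist-ec-EL-reducido-resuelto} is really a \emph{difference equation} for $\lambda$: if we abbreviate $A_{ij}=(R_{u_{ij}})^*\big(d\lag(\cdot,v_{ij})\big)_e$ and $B_{ij}=(R_{v_{ij}})^*\big(d\lag(u_{ij},\cdot)\big)_e$, the system reads
\begin{equation*}
\lambda_{\Delta_{ij}} = Ad^*_{v_{ij-1}}\lambda_{\Delta_{ij-1}} - A_{ij}, \qquad
\lambda_{\Delta_{ij}} = Ad^*_{u_{i-1j}}\lambda_{\Delta_{i-1j}} + B_{ij},
\end{equation*}
so $\lambda_{\Delta_{ij}}$ is determined by $\lambda_{\Delta_{ij-1}}$ and separately by $\lambda_{\Delta_{i-1j}}$. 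Fixing the value of $\lambda$ at one face (say $\Delta_{00}$) and propagating along, e.g., the bottom row using the first relation and then up each column using the second relation, one obtains a candidate $\lambda\in\textrm{Map}(\U,\alglie^*)$ on any simply connected finite $\U$; the non-uniqueness is exactly the free choice of $\lambda_{\Delta_{00}}$ (and of the boundary data), matching the statement ``non-unique''.

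The substance of the proof is \textbf{consistency}: one must check that the two recursions agree, i.e.\ that propagating ``right then up'' gives the same value as ``up then right'' around every unit square of faces. Concretely, from the two expressions for $\lambda_{\Delta_{ij}}$ the compatibility around the square with corners $\Delta_{i-1j-1},\Delta_{ij-1},\Delta_{i-1j},\Delta_{ij}$ amounts to
\begin{equation*}
-A_{ij} + Ad^*_{v_{ij-1}}(Ad^*_{u_{i-1j-1}}\lambda_{\Delta_{i-1j-1}} + B_{ij-1})
= B_{ij} + Ad^*_{u_{i-1j}}(Ad^*_{v_{i-1j-1}}\lambda_{\Delta_{i-1j-1}} - A_{i-1j}),
\end{equation*}
and here the coefficient of $\lambda_{\Delta_{i-1j-1}}$ cancels because of the admissibility identity $u_{i-1j-1}v_{ij-1}=v_{i-1j-1}u_{i-1j}$, which forces $Ad^*_{v_{ij-1}}Ad^*_{u_{i-1j-1}} = Ad^*_{u_{i-1j}}Ad^*_{v_{i-1j-1}}$ exactly as computed at the end of the previous theorem. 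What remains of the compatibility condition after this cancellation is precisely the Euler-Poincar\'e equation \eqref{ecuaciones-EP} at the vertex $(i,j)$, rearranged using $Ad^*_{u_{i-1j}}R^*_{u_{i-1j}}=L^*_{u_{i-1j}}$ and $Ad^*_{v_{ij-1}}R^*_{v_{ij-1}}=L^*_{v_{ij-1}}$. Thus the closed-loop consistency of the recursion is equivalent to \eqref{ecuaciones-EP}, so the hypothesis is exactly what is needed for $\lambda$ to be well defined.

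Once $\lambda$ is shown to be consistently defined on $\U$, the pair $(y,\lambda)$ satisfies \eqref{sist-ec-EL-reducido-resuelto} by construction at interior vertices, so the theorem follows. I would organize the write-up as: (i) reduce \eqref{sist-ec-EL-reducido-resuelto} to the two displayed shift relations; (ii) define $\lambda$ by propagation from an arbitrary seed value, making the ``local'' and ``non-unique'' qualifiers precise (the construction works on any finite $\U$ whose dual graph of faces is connected and simply connected, with ambiguity parametrized by the seed); (iii) verify the one-plaquette compatibility, invoking the admissibility identity for the $\lambda_{\Delta_{i-1j-1}}$-term and \eqref{ecuaciones-EP} for the remainder. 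The main obstacle is purely bookkeeping: organizing the propagation so that every interior vertex's pair of equations is met and tracking the $Ad^*$ factors correctly; there is no analytic difficulty, since everything is a finite recursion on a lattice and the only structural input is the curvature-type identity already exploited above.
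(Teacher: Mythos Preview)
Your argument is correct and in fact proves more than the paper does. The paper's own proof is two lines: it reads ``locally'' as meaning \emph{on the spherical neighborhood $S_{(i,j)}=\{\Delta_{ij},\Delta_{i-1j},\Delta_{ij-1}\}$ of a single interior vertex}, chooses $\lambda_{\Delta_{ij}}$ arbitrarily, and solves the two equations of \eqref{sist-ec-EL-reducido-resuelto} for $\lambda_{\Delta_{i-1j}}$ and $\lambda_{\Delta_{ij-1}}$ using only that $Ad^*_{u_{i-1j}}$ and $Ad^*_{v_{ij-1}}$ are isomorphisms. Notice that this argument never invokes the Euler--Poincar\'e hypothesis; the system at one vertex is underdetermined and always solvable. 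Your approach is genuinely different: you interpret ``locally'' as ``on a simply connected patch of $\U$'', recast \eqref{sist-ec-EL-reducido-resuelto} as a pair of shift relations for $\lambda$, and show that the plaquette consistency of the two propagations is equivalent to \eqref{ecuaciones-EP} together with the admissibility identity $u_{i-1j-1}v_{ij-1}=v_{i-1j-1}u_{i-1j}$---the very computation carried out in the proof of the preceding theorem, run in reverse. This buys you a multiplier defined coherently on an entire region rather than just three faces, and it makes transparent \emph{why} the Euler--Poincar\'e equations appear in the hypothesis. One small slip: your first displayed relation propagates in $j$ (vertically) and the second in $i$ (horizontally), so your sentence about propagating ``along the bottom row using the first relation'' has the two relations interchanged; the argument itself is unaffected.
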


\begin{proof}
For each vertex $(i,j)\in \interior \U$,  choosing arbitrarily $\lambda_{\Delta_{ij}}$, you can solve  uniquely $\lambda_{\Delta_{i-1j}}$ and $\lambda_{\Delta_{ij-1}}$ in the system of equations \eqref{sist-ec-EL-reducido-resuelto}  over the spherical neighborhood  $S_{(i,j)}=\{\Delta_{ij},\Delta_{i-1j},\Delta_{ij-1}\}$ of $(i,j)$, due to the isomorphisms $Ad^*_{u_{i-1j}}$ and $Ad^*_{v_{ij-1}}$ respectively. 
\end{proof}

\section{Example: discrete harmonic mappings from the discrete plane into the Lie group  \texorpdfstring{$SO(n)$}{SO(n)} }\label{sec:ejemplo} 

For this example, $P=SO(n)\times V_0 \to V_0$ and the reduced Lagrangian density $\lag: (Y= J^1P/SO(n) = SO(n)\times SO(n)\times V_0 \to V_0) \to \R$ is given by the formula
\begin{equation}\label{lag-ejemplo}
\lag\big( (u_{ij},v_{ij}),(i,j)\big) = \operatorname{tr} u_{ij} +  \operatorname{tr} v_{ij}
\end{equation}
where $SO(n)$ is considered immersed in the standard way in the space of the square matrices $M_n(\R)$ \cite{vank}.

If $(a_{kl})$ and $(b_{kl})$ are two copies of the usual local coordinates of $M_n(\R)$ coordinating, respectively, the components $u_{ij}\in SO(n)\subset M_n(\R)$ and $v_{ij}\in SO(n)\subset M_n(\R)$, then the Lagrangian density \eqref{lag-ejemplo} is expressed as:
\begin{equation}\label{lag-ejemplo-coord}
\lag = \sum_{i'=1}^n a_{i'i'}(u_{ij}) + b_{i'i'}(v_{ij}).
\end{equation}

On the other hand, the Lie algebra of $SO(n)$ has as basis the tangent vectors $E_{kl}=\big(\parcial{}{a_{kl}}-\parcial{}{a_{lk}}\big)_e$, $k<l$, for the coordinates $(a_{kl})$ and $E_{kl}=\big(\parcial{}{b_{kl}}-\parcial{}{b_{lk}}\big)_e$, $k<l$, for the coordinates $(b_{kl})$.

Under these conditions, the four terms of the Euler-Poincar\'e equations \eqref{ecuaciones-EP} can be calculated as follows.

Given $u_{ij}\in SO(n)$, the equations of the translation $R_{u_{ij}}: a\in SO(n) \mapsto au_{ij}$ in the coordinates $(a_{kl})$ are: $a_{st}\circ R_{u_{ij}}= \sum_v a_{vt}(u_{ij})a_{sv}$ from where:
\begin{align*}
\big( (R_{u_{ij}})_{*,e} E_{kl}\big) a_{st} = & E_{kl} (a_{st}\circ R_{u_{ij}}) = \big(\parcial{}{a_{kl}}-\parcial{}{a_{lk}}\big)_e \big( \sum_v a_{vt}(u_{ij})a_{sv}\big) \\
 = & \delta_{kj}a_{lt}(u_{ij}) - \delta_{ls}a_{kt}(u_{ij}).
\end{align*}

From this it follows:
\begin{equation}\label{traslacion-izqda-ejemplo}
\big( (R_{u_{ij}})_{*,e} E_{kl}\big)  = \sum_{j'=1}^n a_{lj'}(u_{ij})\parcial{}{a_{kj'}} - a_{kj'}(u_{ij})\parcial{}{a_{lj'}}.
\end{equation}

Then, taking into account \eqref{lag-ejemplo-coord} and \eqref {traslacion-izqda-ejemplo} we  have:
\begin{align*}
\big( R^*_{u_{ij}} d\lag(\cdot, v_{ij})\big)_e  E_{kl}  = & \big( d\lag(\cdot, v_{ij})\big)_{u_{ij}}  \big( (R_{u_{ij}})_{*,e} E_{kl}\big) =  \big( (R_{u_{ij}})_{*,e} E_{kl}\big) \lag(\cdot, v_{ij})\\
= & \Big(\sum_{j'=1}^n a_{lj'}(u_{ij})\parcial{}{a_{kj'}} - a_{kj'}(u_{ij})\parcial{}{a_{lj'}} \Big)\big(\sum_{i'=1}^n a_{i'i'}+ b_{i'i'}\big) \\
= & a_{lk}(u_{ij}) -  a_{kl}(u_{ij}).
\end{align*}

Proceeding analogously with the other terms of equations \eqref{ecuaciones-EP} we obtain:
\begin{align*}
\big( & L^*_{u_{i-1j}} d\lag(\cdot, v_{i-1j})\big)_e  E_{kl}  =  a_{lk}(u_{i-1j}) -  a_{kl}(u_{i-1j}), \\
\big( & R^*_{v_{ij}} d\lag( u_{ij},\cdot)\big)_e  E_{kl}  =  b_{lk}(v_{ij}) -  b_{kl}(v_{ij}), \\
\big( & L^*_{v_{ij-1}} d\lag( u_{ij-1}, \cdot)\big)_e  E_{kl}  =   b_{lk}(v_{ij-1}) -  b_{kl}(v_{ij-1}).
\end{align*}

Then,  equations \eqref{ecuaciones-EP}  applied to the elements $E_{kl}$, $k<l$ of the Lie algebra of $SO(n)$ are:
\begin{align*}
\big( a_{lk}(u_{ij}) -  a_{kl}(u_{ij}) \big)
 - & \big( a_{lk}(u_{i-1j}) -  a_{kl}(u_{i-1j}) \big)
 +\big(  b_{lk}(v_{ij}) -  b_{kl}(v_{ij})\big) \\
 - & \big( b_{lk}(v_{ij-1}) -  b_{kl}(v_{ij-1}) \big)   =0
\end{align*}
 or equivalently, with the notation of $a^t$ for the transposition of $a$,
 $$
 a_{kl}(u^t_{ij}-u_{ij} + u_{i-1j}- u^t_{i-1j}) 
 + b_{kl}(v^t_{ij}-v_{ij} + v_{ij-1}- v^t_{ij-1}) =0.
 $$
 
 In other words, equations (23) are equivalent to the fact that, for all $k<l$, the $(k,l)$ element of the matrix $u^t_{ij}-u_{ij} + u_{i-1j}- u^t_{i-1j}$  coincides with the $(k,l)$ element of the matrix $-(v^t_{ij}-v_{ij} + v_{ij-1}- v^t_{ij-1})$. This, by virtue of the skew-symmetry of both matrices, is equivalent to this equality being verified for every $(k,l)$ element. Thus, the Euler-Poincar\'e equations \eqref{ecuaciones-EP} of our example are:
$$
u_{ij} + v_{ij}- u_{i-1j} -  v_{ij-1} = 
u^t_{ij} +v^t_{ij}  - u^t_{i-1j}- v^t_{ij-1},
$$
which  were obtained in \cite{vank} by another procedure.

Applying again the above calculations to  equations \eqref{sist-ec-EL-reducido-resuelto} of the corresponding problem of  Lagrange, they can be expressed as follows:
$$
\left\{
\begin{aligned}
-  \lambda_{\Delta_{ij}} + Ad^*_{v_{ij-1}}  \lambda_{\Delta_{ij-1}}   & =  \sum_{k<l} \big(a_{lk}(u_{ij}) -  a_{kl}(u_{ij}) \big) E^*_{kl}\\
 \lambda_{\Delta_{ij}} - Ad^*_{u_{i-1j}}  \lambda_{\Delta_{i-1j}}   & =\sum_{k<l} \big(b_{lk}(v_{ij}) -  b_{kl}(v_{ij}) \big) E^*_{kl}
\end{aligned}
\right.
$$

\end{document}